\DeclareMathOperator*{\liminflarge}{\mbox{\large\rm liminf}}
\DeclareMathOperator*{\inflarge}{\mbox{\large\rm inf}}
\newcommand{\norm}[1]{\left\Vert#1\right\Vert}
\newcommand{\TO}[1]{\stackrel{#1}{\to}}
\newcommand {\R} {\mathbb R}
\newcommand {\B} {\mathbb B}
\newcommand {\sd} {\partial}
\newcommand {\ptrog} {{\rm int}\,}
\newcommand {\gr} {{\rm gph}\,}
\def\RHS{right-hand side}
\spnewtheorem{thm}{Theorem}[section]{\bfseries}{\rmfamily}
\spnewtheorem{prop}{Proposition}[section]{\bfseries}{\rmfamily}
\spnewtheorem{lem}{Lemma}[section]{\bfseries}{\rmfamily}
\spnewtheorem{rem}{Remark}[section]{\itshape}{\rmfamily}
\spnewtheorem{cor}{Corollary}[section]{\bfseries}{\rmfamily}
\spnewtheorem{examp}{Example}[section]{\itshape}{\rmfamily}
\spnewtheorem{dfn}{Definition}[section]{\bfseries}{\rmfamily}
\begin{document}
\title{Quantitative Characterizations of Regularity Properties of Collections of Sets
\thanks{The research was supported by the Australian Research Council, project DP110102011.
}}

\author{Alexander~Y.~Kruger \and Nguyen~H.~Thao}

\institute{A. Y.~Kruger (\Letter\,) \at
Centre for Informatics and Applied Optimization, School of Science, Information Technology and Engineering, Federation University Australia, POB 663, Ballarat, Vic, 3350, Australia\\
\email{a.kruger@federation.edu.au} \and N. H.~Thao \at
Centre for Informatics and Applied Optimization, School of Science, Information Technology and Engineering, Federation University Australia, POB 663, Ballarat, Vic, 3350, Australia\\
\email{hieuthaonguyen@students.federation.edu.au, nhthao@ctu.edu.vn}}

\date{Received: date / Accepted: date}

\journalname{J Optim Theory Appl}
\maketitle

\begin{abstract}
Several primal and dual quantitative characterizations of regularity properties of collections of sets in normed linear spaces are discussed. Relationships between regularity properties of collections of sets and those of set-valued mappings are provided.
\end{abstract}

\keywords{Collections of sets \and Metric regularity \and Normal cone \and Subdifferential}
\subclass{49J53 \and 49K27 \and 58E30}

\section{Introduction}

Regularity properties of collections of sets play an important role in variational analysis and optimization, particularly as constraint qualifications in establishing optimality conditions and coderivative/subdifferential calculus and in analyzing convergence of numerical algorithms.

The concept of linear regularity was introduced in \cite{BauBor93,BauBor96} as a key condition in establishing linear convergence rates of sequences generated by the cyclic projection algorithm for finding the projection of a point on the intersection of a collection of closed convex sets.
This property has proved to be an important qualification condition in the convergence analysis, optimality conditions, and subdifferential calculus; cf., e.g.,
\cite{
BakDeuLi05,
BauBorLi99, BauBorTse00,BurDeng05,Iof89,
LiNgPon07,NgaiThe01,
ZheNg08,ZheWeiYao10}.

Recently, when investigating the extremality, stationarity and regularity properties of collections of sets systematically, several other kinds of regularity properties have been considered in \cite{Kru05.1,Kru06.1,Kru09.1, KruLop12.1,KruLop12.2,KruTha13.1}.
They have proved to be useful in convergence analysis \cite{AttBolRedSou10,KruTha13.1,LewLukMal09,Luk12,Luk13, HesLuk,HesLuk13} and are closely related to certain stationarity properties involved in extensions of the extremal principle \cite{Mor06.1,Kru03.1,Kru04.1,Kru09.1, KruLop12.1}.

In this study, we aim at providing primal and dual quantitative characterizations of several regularity properties of collections of sets.
We also discuss their relationships with the corresponding regularity properties of set-valued mappings.

After introducing in the next section some basic notation, we discuss in Section~\ref{URtheory} three primal space local regularity properties of collections of sets, namely, \emph{semiregularity}, \emph{subregularity}, and \emph{uniform regularity} as well as their quantitative characterizations.
The main result of this section -- Theorem~\ref{T2.1} -- gives equivalent metric characterizations of the three mentioned regularity properties.
Section~\ref{S4} is dedicated to dual characterizations of the regularity properties.
In Theorem~\ref{T3.1}~(i), we give a sufficient condition of subregularity in terms of Fr\'echet normals.
In Section~\ref{S5}, we present relationships between regularity properties of collections of sets and the corresponding regularity properties of set-valued mappings.

\section{Notation}

Our basic notation is standard; cf. \cite{Mor06.1,RocWet98}.
For a normed linear space $X$, its topological dual is denoted $X^*$, while $\langle\cdot,\cdot\rangle$ denotes the bilinear form defining the pairing between the two spaces.
The closed unit ball in a normed space is denoted $\B $, $B_\delta(x)$ stands for the closed ball with radius $\delta$ and centre $x$.
Products of normed spaces will be considered with the maximum type norms, if not specified otherwise.

The Fr\'echet normal cone to a set $\Omega\subset X$ at $x\in \Omega$ and the Fr\'echet subdifferential of a function $f:X \to\R_\infty:=\R\cup\{+\infty\}$ at a point $x$ with $f(x)<\infty$ are defined, respectively, by
\begin{equation*}
N_{\Omega}(x):= \left\{x^* \in X^* :\; \limsup_{u\to{x},\,u\in\Omega\setminus\{x\}} \frac {\langle x^*,u-x \rangle}{\|u-x\|} \leq 0 \right\},
\end{equation*}
\begin{equation*}
\partial f(x):=\left\{x^*\in X^*:\; \liminf_{\substack{u\to x,\,u\neq x}} \frac{f(u)-f(x)-\langle x^*,u-x\rangle}{\norm{u-x}}\ge 0\right\}.
\end{equation*}

For a given set $\Omega\subset X$, the distance function associated with $\Omega$ is defined by
$$
d(x,\Omega):=\inf_{\omega\in\Omega}\norm{x-\omega},\; \forall x\in X.
$$

In the sequel, $\bold{\Omega}$ stands for a collection of $m$ $(m\ge 2)$ sets $\Omega_1,\ldots,\Omega_m$ in a normed linear space $X$, and we assume the existence of a point $\bar x\in \bigcap_{i=1}^m\Omega_i$.

\section{Regularity Properties of Collections of Sets}\label{URtheory}

In this section, we discuss local primal space regularity properties of finite collections of sets and their primal space characterizations.

\subsection{Definitions}

The next definition introduces several regularity properties of $\bold{\Omega}$ at $\bar x$.

\begin{dfn}\label{HolderUR}
\begin{enumerate}
\item
$\bold{\Omega}$ is semiregular at $\bar x$ iff there exist positive numbers $\alpha$ and $\delta$ such that
\begin{equation}\label{HR}
\bigcap_{i=1}^m(\Omega_i-x_i)\  \bigcap\ B_{\rho}(\bar x) \neq \emptyset
\end{equation}
for all $\rho \in ]0,\delta[$ and all
$x_i\in{X}$ $(i=1,\ldots,m)$ such that $\max\limits_{1\leq i\leq m}\|x_i\| \le \alpha\rho$.
\item
$\bold{\Omega}$ is subregular at $\bar x$ iff there exist positive numbers $\alpha$ and $\delta$ such that
\begin{equation}\label{Hlr'}
\bigcap_{i=1}^m\left(\Omega_i+ (\alpha\rho)\B \right)\bigcap B_{\delta}(\bar x)\subseteq
\left(\bigcap_{i=1}^m \Omega_i\right)+\rho\B
\end{equation}
for all $\rho\in ]0,\delta[$.
\item
$\bold{\Omega}$ is uniformly regular at $\bar x$ iff there exist positive numbers $\alpha$ and $\delta$ such that
\begin{equation}\label{HUR}
\bigcap_{i=1}^m(\Omega_i-\omega_i-x_i) \bigcap(\rho\B )\neq \emptyset
\end{equation}
for all $\rho \in ]0,\delta[$, $\omega_i \in \Omega_i \cap B_{\delta}(\bar{x})$, and all $x_i\in{X}$ $(i=1,\ldots,m)$ such that $\max\limits_{1\leq i\leq m}\|x_i\| \le \alpha\rho$.
\end{enumerate}
\end{dfn}

\begin{rem}\label{rem1}
Among the three regularity properties in Definition~\ref{HolderUR}, the third one is the strongest.
Indeed, condition \eqref{HR} corresponds to taking $\omega_i=\bar x$ in \eqref{HUR}.
To compare properties (ii) and (iii), it is sufficient to notice that condition \eqref{Hlr'} is equivalent to the following one: for any $x\in B_{\delta}(\bar x)$, $\omega_i \in \Omega_i$,  $x_i\in{X}$ $(i=1,\ldots,m)$ such that $\max\limits_{1\leq i\leq m}\|x_i\| \le \alpha\rho$, and $\omega_i+x_i=x$ $(i=1,\ldots,m)$, it holds
\begin{equation*}
\bigcap_{i=1}^m(\Omega_i-x)\bigcap (\rho\B )\neq \emptyset.
\end{equation*}
This corresponds to taking $\omega_i+x_i=x$ $(i=1,\ldots,m)$ in \eqref{HUR} (with $x\in X$) and possibly choosing a smaller $\delta>0$.
Hence, (iii) $\Longrightarrow$ (i) and (iii) $\Longrightarrow$ (ii).
\end{rem}

\begin{rem}
When $\bar x\in \ptrog \bigcap_{i=1}^m\Omega_i$, all the properties in Definition~\ref{HolderUR} hold true automatically.
\end{rem}

\begin{rem}\label{rem3}
\hspace{-.1cm}\footnote{Observed by a reviewer.}
When $\Omega_1=\Omega_2=\ldots=\Omega_m$, property (ii) in Definition~\ref{HolderUR} is trivially satisfied (with $\alpha=1$).
\end{rem}

The regularity properties in Definition~\ref{HolderUR} can be equivalently defined using the following nonnegative constants which provide quantitative characterizations of these properties:
\begin{gather}\label{01.1}
\theta[\bold{\Omega}](\bar{x}):= \liminf_{\rho \downarrow 0} \dfrac{\theta_{\rho}[\bold{\Omega}](\bar{x})}{\rho},
\\\label{01.2}
\zeta[\bold{\Omega}](\bar{x}):= \lim_{\delta\downarrow 0}\inf_{0<\rho<\delta} \dfrac{\zeta_{\rho,\delta}[\bold{\Omega}](\bar{x})} {\rho},
\\\label{01.3}
\hat{\theta}[\bold{\Omega}](\bar{x}):= \liminf_{\omega_i\TO{\Omega_i}\bar{x},\rho\downarrow0} \dfrac{\theta_{\rho}[\Omega_1-\omega_1,\ldots,\Omega_m-\omega_m](0)}{\rho},
\end{gather}
where, for $\rho>0$ and $\delta>0$,
\begin{gather}\label{02.1}
\theta_{\rho}[\bold{\Omega}](\bar{x}):= \sup\left\{r \ge 0 :\; \bigcap_{i=1}^m (\Omega_i - x_i) \bigcap B_{\rho}(\bar{x}) \neq \emptyset,\; \forall x_i \in r\B \right\},
\\\label{02.2}
\zeta_{\rho,\delta}[\bold{\Omega}](\bar{x}):= \sup\left\{r\ge0 :\;\bigcap_{i=1}^m (\Omega_i+ r\B )\bigcap B_{\delta}(\bar x)\subseteq
\bigcap_{i=1}^m \Omega_i+\rho\B \right\}.
\end{gather}

The next proposition follows immediately from the definitions.

\begin{prop}\label{theorem11}
\begin{enumerate}
\item
$\bold{\Omega}$ is semiregular at $\bar x$ if and only if $\theta[\bold{\Omega}](\bar{x})>0$. Moreover, $\theta[\bold{\Omega}](\bar{x})$ is the exact upper bound of all numbers $\alpha$ such that (\ref{HR}) is satisfied.
\item
$\bold{\Omega}$ is subregular at $\bar x$ if and only if $\zeta[\bold{\Omega}](\bar{x})>0$. Moreover, $\zeta[\bold{\Omega}](\bar{x})$ is the exact upper bound of all numbers $\alpha$ such that (\ref{Hlr'}) is satisfied.
\item
$\bold{\Omega}$ is uniformly regular at $\bar x$ if and only if $\hat{\theta}[\bold{\Omega}](\bar{x})>0$. Moreover, $\hat{\theta}[\bold{\Omega}](\bar{x})$ is the exact upper bound of all numbers $\alpha$ such that (\ref{HUR}) is satisfied.
\end{enumerate}
\end{prop}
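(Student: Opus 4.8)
The plan is to prove each of the three equivalences separately, since they are structurally parallel; in each case the argument amounts to unwinding the definitions of the constants in \eqref{01.1}--\eqref{01.3} via the auxiliary quantities \eqref{02.1}--\eqref{02.2}, and observing that the condition ``the constant is positive'' is, almost verbatim, the condition ``there exist $\alpha,\delta>0$ such that the corresponding inclusion or nonemptiness condition holds for all $\rho\in{]0,\delta[}$''. The key observation is monotonicity: $\theta_\rho[\bold\Omega](\bar x)$ is, by \eqref{02.1}, a supremum over $r$ of a condition that is \emph{downward hereditary} in $r$ (if it holds for some $r$ it holds for every smaller $r\ge0$, since $r'\B\subset r\B$), so $\theta_\rho[\bold\Omega](\bar x)\ge\alpha\rho$ exactly means that \eqref{HR} holds for that particular $\rho$; similarly $\zeta_{\rho,\delta}[\bold\Omega](\bar x)\ge\alpha\rho$ is exactly \eqref{Hlr'} for that $(\rho,\delta)$, using that the inclusion in \eqref{02.2} is downward hereditary in $r$ because $\Omega_i+r'\B\subseteq\Omega_i+r\B$ when $r'\le r$.

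For part (i): First I would show that if $\theta[\bold\Omega](\bar x)>0$, then picking any $\alpha$ with $0<\alpha<\theta[\bold\Omega](\bar x)$, the definition of $\liminf$ in \eqref{01.1} yields a $\delta>0$ with $\theta_\rho[\bold\Omega](\bar x)/\rho>\alpha$ for all $\rho\in{]0,\delta[}$, hence by the monotonicity remark \eqref{HR} holds for all such $\rho$ and all $x_i$ with $\max_i\|x_i\|\le\alpha\rho$; thus $\bold\Omega$ is semiregular. Conversely, semiregularity with constants $\alpha,\delta$ gives $\theta_\rho[\bold\Omega](\bar x)\ge\alpha\rho$ for all $\rho\in{]0,\delta[}$ (here one must be slightly careful: \eqref{HR} for all $x_i\in(\alpha\rho)\B$ gives $\theta_\rho\ge\alpha\rho$ by definition of the supremum, even if the supremum is not attained), so $\theta[\bold\Omega](\bar x)\ge\alpha>0$. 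The ``exact upper bound'' claim then follows by letting $\alpha$ run up to $\theta[\bold\Omega](\bar x)$ in the first implication and noting the second implication shows no larger $\alpha$ can work.

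Parts (ii) and (iii) go the same way. For (ii), the only extra point is that \eqref{01.2} uses $\lim_{\delta\downarrow0}\inf_{0<\rho<\delta}$ rather than a plain $\liminf$; but the inner infimum is nondecreasing as $\delta\downarrow0$, so this limit exists in $[0,+\infty]$ and equals $\sup_{\delta>0}\inf_{0<\rho<\delta}\zeta_{\rho,\delta}[\bold\Omega](\bar x)/\rho$, which is positive precisely when some $\delta$ makes the inner infimum positive, i.e.\ when \eqref{Hlr'} holds for all $\rho\in{]0,\delta[}$ with a common $\alpha$. For (iii), one uses \eqref{01.3} together with the identity $\theta_\rho[\Omega_1-\omega_1,\ldots,\Omega_m-\omega_m](0)\ge\alpha\rho$ $\iff$ \eqref{HUR} holds for that $\rho$ and those $\omega_i$, translating the $\liminf$ over $\omega_i\TO{\Omega_i}\bar x,\rho\downarrow0$ into the existence of a single $\delta$ controlling both $\rho$ and the $\omega_i\in\Omega_i\cap B_\delta(\bar x)$. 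The reasoning is entirely routine; the only mild subtlety, which I would state once and reuse, is the supremum-not-attained issue in passing between ``\eqref{HR}/\eqref{Hlr'}/\eqref{HUR} holds with constant $\alpha$'' and ``the auxiliary constant is $\ge\alpha\rho$'', and the direction of the inequalities when extracting $\delta$ from a $\liminf$. I do not expect a genuine obstacle here --- the proposition is explicitly billed as following ``immediately from the definitions''.
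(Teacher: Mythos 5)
Your argument is correct and is precisely the routine unwinding of definitions \eqref{01.1}--\eqref{02.2} that the paper itself intends: the authors give no separate proof, stating only that the proposition ``follows immediately from the definitions.'' The two points you single out --- the downward heredity in $r$ of the conditions inside the suprema (which resolves the supremum-not-attained issue up to an arbitrarily small loss in $\alpha$, harmless for the exact-upper-bound claims) and the monotonicity in $\delta$ making the outer limit in \eqref{01.2} a supremum --- are exactly the right things to check, so no gap remains.
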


\begin{rem}
Properties (i) and (iii) in Definition~\ref{HolderUR} were discussed in \cite{Kru06.1} (where they were called \emph{regularity} and \emph{strong regularity}, respectively) and \cite{Kru09.1} (properties (R)$_S$ and (UR)$_S$) and \cite{KruLop12.1} (\emph{regularity} and \emph{uniform regularity}).
The current terminology used in parts (i) and (ii) of Definition~\ref{HolderUR} comes from the standard terminology used for the corresponding regularity properties of set-valued mappings; cf. Section~\ref{S5}.

Constants \eqref{01.1}, \eqref{01.3}, and \eqref{02.1} can be traced back to \cite{Kru96.2,Kru98.1,Kru00.1,Kru02.1,Kru03.1,Kru04.1,Kru05.1}.
Property (ii) in Definition~\ref{HolderUR} and constants \eqref{01.2} and \eqref{02.2} are new.
\end{rem}

\begin{rem}\label{newrem}
If finite, constants $\zeta[\bold{\Omega}](\bar{x})$ and $\hat{\theta}[\bold{\Omega}](\bar{x})$ always take values in $[0,1]$, while constant $\theta[\bold{\Omega}](\bar{x})$ can be strictly greater than one (cf. Example \ref{newexa} below).
In view of Remark \ref{rem1}, it is not difficult to check that
$\hat{\theta}[\bold{\Omega}](\bar{x})\le \min\{\theta[\bold{\Omega}](\bar{x}), \zeta[\bold{\Omega}](\bar{x})\}$.
\end{rem}

The equivalent representation of constant \eqref{02.1} given in the next proposition can be useful.

\begin{prop}\label{pr1}
For any $\rho>0$,
\begin{gather}\label{02.1+}
\theta_{\rho}[\bold{\Omega}](\bar{x})= \sup\left\{r \ge 0 :\; r\B ^m\subseteq\bigcup_{x\in B_{\rho}(\bar{x})}\prod_{i=1}^m (\Omega_i-x)\right\},
\end{gather}
where $\prod_{i=1}^m(\Omega_i-x) =(\Omega_1-x)\times\ldots\times(\Omega_m-x)$ and $\B ^m=\prod_{i=1}^m\B $.
\end{prop}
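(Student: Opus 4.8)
The plan is to show directly that the two supremum expressions define the same number, by showing that the defining condition in \eqref{02.1} — namely, that $\bigcap_{i=1}^m(\Omega_i-x_i)\cap B_\rho(\bar x)\neq\emptyset$ for all $x_i\in r\B$ — is literally the same condition as the inclusion $r\B^m\subseteq\bigcup_{x\in B_\rho(\bar x)}\prod_{i=1}^m(\Omega_i-x)$ appearing in \eqref{02.1+}. Since both sides of the claimed identity are suprema of $r\ge 0$ over conditions on $r$, it suffices to prove that for a fixed $r\ge 0$ the two conditions are equivalent; then the two sets of admissible $r$ coincide, and hence so do their suprema.

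So I would fix $r\ge 0$ and unpack both statements. For the condition in \eqref{02.1}: it says that for every choice of $(x_1,\ldots,x_m)$ with each $\|x_i\|\le r$, there exists a point $z$ with $z\in\Omega_i-x_i$ for all $i$ and $z\in B_\rho(\bar x)$. For the condition in \eqref{02.1+}: it says that every element $(u_1,\ldots,u_m)\in r\B^m$ — i.e., every tuple with $\|u_i\|\le r$ for all $i$ — lies in $\bigcup_{x\in B_\rho(\bar x)}\prod_{i=1}^m(\Omega_i-x)$, which means there exists $x\in B_\rho(\bar x)$ with $u_i\in\Omega_i-x$ for every $i$. The key observation is the change of variables: given the $x_i$'s of the first formulation, a common point $z\in B_\rho(\bar x)$ lying in each $\Omega_i-x_i$ means $z+x_i\in\Omega_i$, equivalently $x_i\in\Omega_i-z$ for every $i$; setting $x:=z$ shows the tuple $(x_1,\ldots,x_m)\in\prod_{i=1}^m(\Omega_i-x)$ with $x\in B_\rho(\bar x)$. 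Conversely, given a tuple $(u_1,\ldots,u_m)\in r\B^m$ and $x\in B_\rho(\bar x)$ with $u_i\in\Omega_i-x$, i.e. $x+u_i\in\Omega_i$, i.e. $x\in\Omega_i-u_i$, we see that $x$ is a common point of the sets $\Omega_i-u_i$ lying in $B_\rho(\bar x)$. Thus the roles of the shifting vectors and the common point are simply interchanged, and quantifying "for all tuples, there exists a common point" matches exactly "for all tuples, there exists an $x$" once one identifies the common point with $x$.

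I do not expect a genuine obstacle here — the result is essentially a tautological reformulation once the bookkeeping is done correctly. The only point requiring a little care is the direction of the inclusion and the role of the empty intersection: one must check that "$\bigcap_{i=1}^m(\Omega_i-x_i)\cap B_\rho(\bar x)\neq\emptyset$ for all $x_i\in r\B$" is preserved as an equivalence even at $r=0$ (where both conditions reduce to $\bar x\in\bigcap_{i=1}^m\Omega_i$, which holds by the standing assumption, so $r=0$ is always admissible and contributes nothing to the supremum), and that the suprema are taken over sets that are nonempty in both formulations. Once the equivalence of the two conditions is established for each fixed $r$, the equality of the two suprema is immediate, completing the proof. \qed
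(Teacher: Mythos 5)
Your proposal is correct and follows essentially the same route as the paper: the paper's proof likewise observes that $\bigcap_{i=1}^m(\Omega_i-x_i)\cap B_\rho(\bar x)\neq\emptyset$ is equivalent to the existence of $x\in B_\rho(\bar x)$ with $x_i\in\Omega_i-x$ for all $i$, and that requiring this for all $x_i\in r\B$ is exactly the inclusion $r\B^m\subseteq\bigcup_{x\in B_\rho(\bar x)}\prod_{i=1}^m(\Omega_i-x)$. Your extra remarks on the case $r=0$ and nonemptiness are harmless and correctly resolved.
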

\begin{proof}
It is sufficient to observe that condition
$$\bigcap_{i=1}^m (\Omega_i - x_i) \bigcap B_{\rho}(\bar{x}) \neq \emptyset$$
in \eqref{02.1} is equivalent to the existence of $x\in B_{\rho}(\bar{x})$ such that $x_i\in\Omega_i - x$ for all $i=1,\ldots,m$.
This holds true for all $x_i \in r\B$ if and only if
\begin{gather*}
r\B ^m\subseteq\bigcup_{x\in B_{\rho}(\bar{x})}\prod_{i=1}^m (\Omega_i-x).
\end{gather*}
\qed\end{proof}

From Propositions~\ref{theorem11} and \ref{pr1}, we immediately obtain equivalent representations of semiregularity and uniform regularity.
\begin{cor}
\begin{enumerate}
\item
$\bold{\Omega}$ is semiregular at $\bar x$ if and only if there exist  positive numbers $\alpha$ and $\delta$ such that
\begin{equation}\label{HR2}
(\alpha\rho) \mathds{B}^m\subseteq \bigcup_{x\in B_{\rho}(\bar{x})}\prod_{i=1}^m (\Omega_i-x)
\end{equation}
for all $\rho\in]0,\delta[$.
Moreover, $\theta[\bold{\Omega}](\bar{x})$ is the exact upper bound of all numbers $\alpha$ such that (\ref{HR2}) is satisfied.
\item
$\bold{\Omega}$ is uniformly regular at $\bar x$ if and only if there exist  positive numbers $\alpha$ and $\delta$ such that
\begin{equation}\label{umi2}
(\alpha\rho) \mathds{B}^m\subseteq \bigcap_{\substack{\omega_i\in\Omega_i\cap B_{\delta}(\bar x)\\ (i=1,\ldots,m)}} \bigcup_{x\in\rho\B }\prod_{i=1}^m (\Omega_i-\omega_i-x)
\end{equation}
for all $\rho\in]0,\delta[$.
Moreover, $\hat\theta[\bold{\Omega}](\bar{x})$ is the exact upper bound of all numbers $\alpha$ such that (\ref{umi2}) is satisfied.
\end{enumerate}
\end{cor}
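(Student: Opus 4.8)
The plan is to deduce both statements directly from Propositions~\ref{theorem11} and \ref{pr1}, by checking that the inclusions \eqref{HR2} and \eqref{umi2} are mere reformulations of conditions \eqref{HR} and \eqref{HUR}, respectively.

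For part~(i), fix $\rho>0$ and $\alpha>0$. As in the proof of Proposition~\ref{pr1}, the relation $\bigcap_{i=1}^m(\Omega_i-x_i)\cap B_\rho(\bar x)\neq\emptyset$ holds if and only if there is an $x\in B_\rho(\bar x)$ with $(x_1,\dots,x_m)\in\prod_{i=1}^m(\Omega_i-x)$; and since the product $X^m$ carries the maximum norm, the set of families $(x_1,\dots,x_m)$ with $\max_{1\le i\le m}\|x_i\|\le\alpha\rho$ is exactly the ball $(\alpha\rho)\B^m$. Hence the displayed relation holds for all such families if and only if $(\alpha\rho)\B^m\subseteq\bigcup_{x\in B_\rho(\bar x)}\prod_{i=1}^m(\Omega_i-x)$. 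Quantifying over $\rho\in\,]0,\delta[$ shows that, for any given pair $(\alpha,\delta)$, condition \eqref{HR} holds if and only if \eqref{HR2} holds; in particular the set of admissible $\alpha$ is the same for both formulations. The equivalence between semiregularity of $\bold{\Omega}$ at $\bar x$ and the existence of positive $\alpha,\delta$ satisfying \eqref{HR2}, as well as the identification of $\theta[\bold{\Omega}](\bar x)$ as the exact upper bound of such $\alpha$, then follow at once from the corresponding assertions of Proposition~\ref{theorem11}(i).

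For part~(ii) I would repeat the argument for the shifted collection $\Omega_1-\omega_1,\dots,\Omega_m-\omega_m$ based at the origin. For each fixed family $\omega_i\in\Omega_i\cap B_\delta(\bar x)$ $(i=1,\dots,m)$ and each $\rho>0$, Proposition~\ref{pr1} applied to that collection at $0$ (so that $B_\rho(0)=\rho\B$) gives that $\bigcap_{i=1}^m(\Omega_i-\omega_i-x_i)\cap(\rho\B)\neq\emptyset$ for all $(x_1,\dots,x_m)$ with $\max_{1\le i\le m}\|x_i\|\le\alpha\rho$ if and only if $(\alpha\rho)\B^m\subseteq\bigcup_{x\in\rho\B}\prod_{i=1}^m(\Omega_i-\omega_i-x)$. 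Intersecting over all admissible $\omega_i$ and quantifying over $\rho\in\,]0,\delta[$ turns \eqref{HUR} into \eqref{umi2} for the same $(\alpha,\delta)$, so the assertion follows from Proposition~\ref{theorem11}(iii), now with $\hat\theta[\bold{\Omega}](\bar x)$ in place of $\theta[\bold{\Omega}](\bar x)$. Since this is genuinely a corollary, I do not expect a real obstacle; the only points requiring attention are the bookkeeping of the universal quantifiers (the quantifier over the $x_i$'s must be recast as a single set inclusion, and in~(ii) the quantifier over the $\omega_i$'s becomes the intersection in \eqref{umi2}) and the convention, recalled above, that products of normed spaces carry the maximum norm.
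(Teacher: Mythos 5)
Your proposal is correct and follows exactly the route the paper intends: the paper derives this corollary "immediately" from Propositions~\ref{theorem11} and \ref{pr1}, which is precisely your rewriting of the quantifier over the $x_i$'s as the set inclusion $(\alpha\rho)\B^m\subseteq\bigcup_{x}\prod_i(\Omega_i-x)$ via the maximum norm on $X^m$, plus the application to the shifted collections $\Omega_1-\omega_1,\ldots,\Omega_m-\omega_m$ at $0$ for part~(ii). The bookkeeping of the intersection over the $\omega_i$'s and the identification of the exact upper bounds are handled correctly.
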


\begin{rem}
The definition of subregularity in Definition~\ref{HolderUR} (ii) is already of inclusion type in the setting of the original space $X$.
There is no need to consider the product space $X^m$.
\end{rem}

\subsection{Examples}\label{examples}

We next present examples illustrating that properties (i) and (ii) in Definition~\ref{HolderUR} are in general independent and none of these two properties implies property (iii) in Definition~\ref{HolderUR}.

\begin{examp}\label{exa1}
In the real plane ${\R}^2$ with the Euclidean norm, consider two sets
$$
\Omega_1=\Omega_2:= \R\times\{0\}
$$
and the point $\bar x = (0,0) \in \Omega_1 \cap \Omega_2$.
The collection $\{\Omega_1,\Omega_2\}$ is subregular at $\bar x$, while it is not semiregular at this point.
\end{examp}
\begin{proof}
In view of Remark~\ref{rem3}, $\{\Omega_1,\Omega_2\}$ is subregular at $\bar x$.
Observe also that ${(\Omega_1-(0,-\varepsilon))} \cap (\Omega_2-(0,\varepsilon))=\emptyset$ for any $\varepsilon>0$. Hence, by \eqref{02.1} and \eqref{01.1}, $\{\Omega_1,\Omega_2\}$ is not semiregular at $\bar x$.
\sloppy
\qed
\end{proof}
\begin{examp}\label{exa2}
In the real plane ${\R}^2$ with the Euclidean norm, consider two sets
$$
\Omega_1:= \left\{(u,v) \in {\R}^2 :\; u\le0 \mbox{ or } v\ge u^2\right\},
\;
\Omega_2:= \left\{(u,v) \in {\R}^2 :\; u\le0 \mbox{ or } v\le 0\right\}
$$
and the point $\bar x = (0,0) \in \Omega_1 \cap \Omega_2$.
The collection $\{\Omega_1,\Omega_2\}$ is semiregular at $\bar x$, while it is not subregular at this point.
\end{examp}
\begin{proof}
We first show that $\{\Omega_1,\Omega_2\}$ is semiregular at $\bar x$.
For any number $\rho>0$, we set $x_{\rho}:=(-\rho,0)$.
Then
$B_{\rho}(x_{\rho})\subseteq \Omega_i$, i.e., $x_{\rho}+x_i\in\Omega_i$ for any $x_i\in\rho\B$ $(i=1,2)$, and consequently
$$
x_{\rho}\in(\Omega_1-x_1)\cap (\Omega_2-x_2)\cap B_{\rho}(\bar{x}),\; \forall x_i\in\rho\B\;(i=1,2).
$$
Hence, $\theta_{\rho}[\{\Omega_1,\Omega_2\}](\bar{x})\ge\rho$ and $\theta[\{\Omega_1,\Omega_2\}](\bar{x})\ge1$. (One can show that these are actually equalities.)
Thus, $\{\Omega_1,\Omega_2\}$ is semiregular at $\bar x$.

Suppose that inclusion \eqref{Hlr'} holds for some positive numbers $\alpha$ and $\delta$ and all $\rho\in ]0,\delta[$.
Set $\rho_n:=\frac{1}{n}$ and $x_n:=\left(\sqrt{\alpha\rho_n},\alpha\rho_n\right)$.
Then $x_n\in\left(\Omega_1+ (\alpha\rho_n)\B \right) \bigcap\left(\Omega_2+ (\alpha\rho_n)\B \right)$, $d(x_n,\Omega_1\bigcap\Omega_2)=\sqrt{\alpha\rho_n}$ and, for sufficiently large $n$, $\rho_n<\delta$ and $x_n\in B_{\delta}(\bar x)$.
It follows from \eqref{Hlr'} that $\sqrt{\alpha\rho_n}\le\rho_n$, and consequently $\alpha\le\rho_n$.
This yields $\alpha\le0$ which contradicts the assumptions.
Hence, $\{\Omega_1,\Omega_2\}$ is not subregular at $\bar x$.
\qed
\end{proof}
\begin{examp}
In the real plane ${\R}^2$ with the Euclidean norm, consider two sets
$$
\Omega_1=\Omega_2:= \left\{(u,v) \in {\R}^2 :\; u\le0 \mbox{ or } v=0\right\}
$$
and the point $\bar x = (0,0) \in \Omega_1 \cap \Omega_2$.
The collection $\{\Omega_1,\Omega_2\}$ is both semiregular and subregular at $\bar x$, while it is not uniformly regular at this point.
\end{examp}
\begin{proof}
In view of Remark~\ref{rem3}, $\{\Omega_1,\Omega_2\}$ is subregular at $\bar x$.
Using the arguments from the first part of Example \ref{exa2}, it is easy to check that the collection is semiregular at $\bar x$.
We next show that $\{\Omega_1,\Omega_2\}$ is not uniformly regular at this point. Indeed, for any given numbers $\delta,\alpha>0$, we find positive numbers $\rho<r<\delta$ and take $$\omega_i=(r,0){\in\Omega_i}\cap B_{\delta}(\bar x)\; (i=1,2),\quad a_1=(0,\alpha\rho),\quad a_2=(0,-\alpha\rho)\in \alpha\rho\B.$$
We have
$$
(\Omega_1-\omega_1-a_1)\bigcap (\Omega_2-\omega_2-a_2)\bigcap (\rho\B )=\{(u,v)\in {\R}^2:\; u\le -r\}\bigcap (\rho\B )=\emptyset.
$$
\qed
\end{proof}

The following example demonstrates that the constant $\theta[\bold{\Omega}](\bar{x})$ can take values greater than one.
\begin{examp}\label{newexa}
In the real plane ${\R}^2$ with the Euclidean norm, consider two sets
$$
\Omega_1:={\R}^2,\Omega_2:= \left\{(u,v) \in {\R}^2 :\; u-\sqrt{3}v\ge 0 \mbox{ or } u+\sqrt{3}v\ge 0\right\}
$$
and the point $\bar x = (0,0) \in \Omega_1 \cap \Omega_2$.
Then, $\theta[\bold{\Omega}](\bar{x})=2>1$.
\end{examp}
\begin{proof}
By the structure of the sets, we have
\begin{align*}
\theta_{\rho}[\bold{\Omega}](\bar{x})
&=\sup\{r\ge 0:\; (\Omega_2-x) \bigcap (\rho\B )\neq \emptyset,\; \forall x\in r\B \}\\
&=\sup\{r\ge 0:\; d(0,\Omega_2-x)\le\rho,\; \forall x\in r\B \}\\
&=\sup\{r\ge 0:\; d(x,\Omega_2)\le\rho,\; \forall x\in r\B \}\\
&=\sup\{r\ge 0:\; \max\{d(x,\Omega_2):\; x\in r\B \}\le\rho\}\\
&=\sup\{r\ge 0:\; \frac{r}{2}\le\rho\}=2\rho.
\end{align*}
The second last equality holds true since for any $r>0$,
$$
\max\{d(x,\Omega_2):\; x\in r\B \}=d(x_r,\Omega_2)=\frac{r}{2},
$$
where $x_r:=(-r,0)$.

Hence, by definition,
$$
\theta[\bold{\Omega}](\bar{x})= \liminf_{\rho \downarrow 0} \dfrac{\theta_{\rho}[\bold{\Omega}](\bar{x})}{\rho}=2.
$$
\qed
\end{proof}

\subsection{Metric characterizations}\label{metric}

The regularity properties of collections of sets  in Definition~\ref{HolderUR} can also be characterized in metric terms.
The next proposition provides equivalent metric representations of constants \eqref{01.1} -- \eqref{01.3}.

\begin{prop}\label{pr2}
\begin{align}\label{newcon}
\theta[\bold{\Omega}](\bar{x})&= \liminf_{\substack{x_i\to 0\; (1\le i\le m)\\ \bar x\notin \bigcap_{i=1}^m(\Omega_i-x_i)}}
\frac{\max_{1\le i\le m}\norm{x_i}}{d\Big{(}\bar x,\bigcap_{i=1}^m(\Omega_i-x_i)\Big{)}},
\\\label{errcon'}
\zeta[\bold{\Omega}](\bar{x})&= \quad\liminf_{\substack{x\to\bar{x}\\ x\notin \bigcap_{i=1}^m\Omega_i}}
\quad\frac{\max_{1\le i\le m} d(x,\Omega_i)}{d\Big{(}x,\bigcap_{i=1}^m\Omega_i\Big{)}}
\\\notag
&=\liminf_{\substack{x\to\bar{x}\\ \omega_i\TO{\Omega_i}\bar x\; (1\le i\le m)\\ x\notin \bigcap_{i=1}^m\Omega_i}}\frac{\max_{1\le i\le m}\norm{\omega_i-x}}{d\Big{(}x,\bigcap_{i=1}^m\Omega_i\Big{)}},
\\\label{vartheta}
\hat{\theta}[\bold{\Omega}](\bar{x})&= \liminf_{\substack{x\to\bar{x}\\x_i \to 0\;(1\le i\le m)\\x\notin\bigcap_{i=1}^m(\Omega_i-x_i)}} \frac{\max_{1\le i\le m}d(x+x_i,\Omega_i)} {d\Big{(}x,\bigcap_{i=1}^m(\Omega_i-x_i)\Big{)}}
\\\notag
&= \liminf_{\substack{x\to\bar{x}\\x_i\to0,\; \omega_i\TO{\Omega_i} \bar x\;(1\le i\le m)\\ x\notin\bigcap_{i=1}^m(\Omega_i-x_i)}} \frac{\max_{1\le i\le m} \norm{x+x_i-\omega_i}} {d\Big{(}x,\bigcap_{i=1}^m(\Omega_i-x_i)\Big{)}}.
\end{align}
\end{prop}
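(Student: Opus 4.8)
The plan is to reduce every identity to a distance‑function reformulation of the constants $\theta_\rho$ and $\zeta_{\rho,\delta}$ and then to rearrange the resulting $\liminf$'s. First I would record that, since $B_\rho(\bar x)$ is the \emph{closed} ball, the nonemptiness condition in \eqref{02.1} holds for a given $(x_1,\dots,x_m)$ precisely when $d\big(\bar x,\bigcap_{i=1}^m(\Omega_i-x_i)\big)\le\rho$; taking the supremum of the $r$ for which this holds for all $x_i\in r\B$ gives
$$\theta_{\rho}[\bold{\Omega}](\bar{x})=\inf\Big\{\max_{1\le i\le m}\norm{x_i}\ :\ d\Big(\bar x,\bigcap_{i=1}^m(\Omega_i-x_i)\Big)>\rho\Big\}$$
(with $\inf\emptyset=+\infty$), and likewise, from \eqref{02.2} together with $x\in\bigcap_i(\Omega_i+r\B)\iff\max_id(x,\Omega_i)\le r$ and $x\in\bigcap_i\Omega_i+\rho\B\iff d(x,\bigcap_i\Omega_i)\le\rho$,
$$\zeta_{\rho,\delta}[\bold{\Omega}](\bar{x})=\inf\Big\{\max_{1\le i\le m}d(x,\Omega_i)\ :\ x\in B_\delta(\bar x),\ d\Big(x,\bigcap_{i=1}^m\Omega_i\Big)>\rho\Big\}.$$
If the $\Omega_i$ are not assumed closed, these reformulations hold only up to non-attainment of distances, which is immaterial for the limits below, so I will not dwell on it.

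For \eqref{newcon} the key point is that $\max_i\norm{x_i}$ \emph{is} the norm of $(x_1,\dots,x_m)$ in $X^m$, so that ``$x_i\to0$ $(1\le i\le m)$'' and ``$\max_i\norm{x_i}\to0$'' mean the same thing. Writing $g(x_1,\dots,x_m):=d\big(\bar x,\bigcap_i(\Omega_i-x_i)\big)$ and $R$ for the right-hand side of \eqref{newcon}, I would argue in two directions using \eqref{01.1}. If $R<\infty$, choose $\mathbf x^k\to0$ with $g(\mathbf x^k)>0$ realizing $R$ (then $g(\mathbf x^k)\to0$ too, the ratios being bounded) and put $\rho_k:=\tfrac{k}{k+1}g(\mathbf x^k)\downarrow0$; since $\mathbf x^k$ is then admissible in the $\inf$ defining $\theta_{\rho_k}$, we get $\theta_{\rho_k}/\rho_k\le\tfrac{k+1}{k}\max_i\norm{x^k_i}/g(\mathbf x^k)\to R$, whence $\theta[\bold{\Omega}](\bar x)\le R$ (the case $R=\infty$ being vacuous). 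Conversely, if $\theta[\bold{\Omega}](\bar x)<\infty$, choose $\rho_k\downarrow0$ realizing $\theta[\bold{\Omega}](\bar x)$; a near-minimizer $\mathbf x^k$ in the $\inf$ defining $\theta_{\rho_k}$ has $g(\mathbf x^k)>\rho_k$ and $\max_i\norm{x^k_i}\le\theta_{\rho_k}+\varepsilon_k\to0$, so $\mathbf x^k\to0$, and $\max_i\norm{x^k_i}/g(\mathbf x^k)\le(\theta_{\rho_k}+\varepsilon_k)/\rho_k\to\theta[\bold{\Omega}](\bar x)$ once $\varepsilon_k=o(\rho_k)$, whence $R\le\theta[\bold{\Omega}](\bar x)$. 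Together these give $R=\theta[\bold{\Omega}](\bar x)$ (also when either side is $+\infty$).

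For the first equality in \eqref{errcon'}, set $\phi(x):=\max_id(x,\Omega_i)$ and $\psi(x):=d\big(x,\bigcap_i\Omega_i\big)$; these are $1$-Lipschitz, $\phi\le\psi$, and $\psi(x)\le\norm{x-\bar x}$ because $\bar x\in\bigcap_i\Omega_i$. With the reformulation $\zeta_{\rho,\delta}=\inf\{\phi(x):x\in B_\delta(\bar x),\ \psi(x)>\rho\}$, I would show that for each fixed $\delta>0$,
$$\inf_{0<\rho<\delta}\frac{\zeta_{\rho,\delta}[\bold{\Omega}](\bar x)}{\rho}=\inf\Big\{\frac{\phi(x)}{\psi(x)}\ :\ x\in B_\delta(\bar x),\ \psi(x)>0\Big\}=:\beta_\delta.$$
``$\le$'' holds because any such $x$ satisfies $0<\psi(x)\le\delta$ and is admissible in the $\inf$ defining $\zeta_{\rho,\delta}$ for every $\rho\in(0,\psi(x))$, so $\zeta_{\rho,\delta}/\rho\le\phi(x)/\rho$ and letting $\rho\uparrow\psi(x)$ gives $\phi(x)/\psi(x)$; ``$\ge$'' holds because every $x$ admissible in that $\inf$ has $\phi(x)\ge\beta_\delta\,\psi(x)\ge\beta_\delta\,\rho$. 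Letting $\delta\downarrow0$ and invoking \eqref{01.2} yields \eqref{errcon'}. The second equality in \eqref{errcon'}, and in the same way the second equality in \eqref{vartheta}, is then routine: replace $d(x,\Omega_i)$ (resp.\ $d(x+x_i,\Omega_i)$) by $\norm{\omega_i-x}$ (resp.\ $\norm{x+x_i-\omega_i}$) for a near-projection $\omega_i\in\Omega_i$ of $x$ (resp.\ of $x+x_i$); one inequality is immediate, and the estimates $\norm{\omega_i-\bar x}\le2\norm{x-\bar x}+\varepsilon$ (resp.\ $\le2\norm{x-\bar x}+2\norm{x_i}+\varepsilon$) show $\omega_i\TO{\Omega_i}\bar x$.

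Finally, for \eqref{vartheta} the extra ingredient is a base-point shift applied to the reformulation of $\theta_\rho$. For $\omega_i\in\Omega_i$, the nonemptiness condition in $\theta_\rho[\Omega_1-\omega_1,\dots,\Omega_m-\omega_m](0)$ for a perturbation $(a_1,\dots,a_m)$ says exactly that some $u\in\rho\B$ has $\bar x+u+x_i\in\Omega_i$ for all $i$, where $x_i:=\omega_i+a_i-\bar x$; since $(a_1,\dots,a_m)\mapsto(x_1,\dots,x_m)$ is a bijection for fixed $\omega$ and $\max_i\norm{a_i}=\max_i\norm{\bar x+x_i-\omega_i}$, this becomes
$$\theta_\rho[\Omega_1-\omega_1,\dots,\Omega_m-\omega_m](0)=\inf\Big\{\max_{1\le i\le m}\norm{\bar x+x_i-\omega_i}\ :\ d\Big(\bar x,\bigcap_{i=1}^m(\Omega_i-x_i)\Big)>\rho\Big\}.$$
Inserting this into \eqref{01.3} and repeating the two-directional argument used for \eqref{newcon}, now with the joint running index $(\omega,\rho)$ in place of $\rho$, proves the second equality in \eqref{vartheta}; here one also uses that a tuple $(x,\mathbf x,\omega)$ admissible on the right-hand side of \eqref{vartheta} can be moved to base point $\bar x$ by replacing each $x_i$ with $x_i+(x-\bar x)$, which alters neither $\max_i\norm{x+x_i-\omega_i}$ nor $d\big(x,\bigcap_i(\Omega_i-x_i)\big)$. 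I expect this last identity to be the real obstacle: one has to reconcile the \emph{variable} base point $x\to\bar x$ occurring in \eqref{vartheta} with the \emph{fixed} base point $\bar x$ forced by the definition of $\hat\theta[\bold{\Omega}](\bar x)$, and must keep $\rho_k\downarrow0$ along the near-minimizing sequences; by contrast \eqref{newcon} and \eqref{errcon'} reduce, after the reformulations above, to essentially mechanical $\liminf$ bookkeeping.
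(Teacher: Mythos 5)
Your route is essentially sound but genuinely different from the paper's. For \eqref{newcon} and \eqref{errcon'} the paper never reformulates $\theta_\rho$ and $\zeta_{\rho,\delta}$ as infima; instead it works directly with the defining inclusions \eqref{HR} and \eqref{Hlr'}: it fixes $\gamma$ below the right-hand side, derives the inequality $\gamma\, d(\cdot)\le\max(\cdot)$ on a ball, converts that into \eqref{HR} (resp.\ \eqref{Hlr'}) for every $\alpha<\gamma$, and then runs the same interpolation backwards, picking for each admissible $x$ (or $(x_1,\ldots,x_m)$) the radius $\rho$ with $\max_i d(x,\Omega_i)=\alpha\rho$ (resp.\ $\max_i\|x_i\|=\alpha\rho$). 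This avoids division entirely and so sidesteps the attainment and $0/0$ issues you have to wave at. Your inf-reformulations $\theta_\rho=\inf\{\max_i\|x_i\|: d(\bar x,\bigcap_i(\Omega_i-x_i))>\rho\}$ and the identity $\inf_{0<\rho<\delta}\zeta_{\rho,\delta}/\rho=\inf\{\phi/\psi\}$ are correct (the strict/non-strict boundary discrepancy indeed washes out in the limits) and arguably make the bookkeeping more transparent. The biggest divergence is \eqref{vartheta}: the paper does not prove it at all, citing Theorem~1 of Kruger (2005); your base-point shift $x_i\mapsto x_i+(x-\bar x)$, which is an isometry of all the relevant quantities, is the right way to reconcile the moving base point with the fixed one in \eqref{01.3}, and your self-contained argument is a genuine addition.

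One small gap to repair: in the direction ``$\theta[\bold{\Omega}](\bar x)\le R$'' you set $\rho_k:=\tfrac{k}{k+1}g(\mathbf x^k)$ and need $\rho_k\downarrow0$, which you justify by ``the ratios being bounded''. That justification only works when $R>0$; if $R=0$ the ratios tend to $0$ and give no upper bound on $g(\mathbf x^k)$, so $\rho_k$ need not tend to $0$. The conclusion still holds, but you must treat that case separately: if $g(\mathbf x^k)\ge c>0$ along the sequence while $\max_i\|x^k_i\|\to0$, then for every $\rho<c$ each $\mathbf x^k$ is admissible in the infimum defining $\theta_\rho$, whence $\theta_\rho\le\inf_k\max_i\|x^k_i\|=0$ and $\theta[\bold{\Omega}](\bar x)=0=R$ directly. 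The same caveat applies verbatim to your two-directional argument for \eqref{vartheta}. With that patched, the proposal is complete.
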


\begin{proof}
\emph{Equality \eqref{newcon}}.
Let $\xi$ stand for the \RHS\ of \eqref{newcon}.
Suppose that $\xi>0$ and fix an arbitrary number $\gamma\in ]0,\xi[$.
Then there is a number $\delta>0$ such that
\begin{equation}\label{new1}
\gamma d\left(\bar x,\bigcap_{i=1}^m(\Omega_i-x_i)\right)\le \max_{1\le i\le m}\norm{x_i},\; \forall x_i\in \delta\B \; (i=1,\ldots,m).
\end{equation}
Choose a number $\alpha\in ]0,\gamma[$ and set $\delta'=\frac{\delta}{\alpha}$.
Then, for any $\rho\in ]0,\delta'[$ and $x_i\in (\alpha\rho)\B$ $(i=1,\ldots,m)$, it holds
$\max_{1\le i\le m}\norm{x_i}\le \alpha\rho\le \alpha\delta'=\delta$.
Hence, \eqref{new1} yields
$$
d\left(\bar x,\bigcap_{i=1}^m(\Omega_i-x_i)\right)\le \frac{1}{\gamma}\max_{1\le i\le m}\norm{x_i}\le \frac{\alpha}{\gamma}\rho<\rho.
$$
This implies \eqref{HR} and consequently $\theta[\bold{\Omega}](\bar{x})\ge\alpha$.
Taking into account that $\alpha$ can be arbitrarily close to $\xi$, we obtain $\theta[\bold{\Omega}](\bar{x})\ge \xi$.

Conversely, suppose that $\theta[\bold{\Omega}](\bar{x})>0$ and fix an arbitrary number $\alpha\in ]0,\theta[\bold{\Omega}](\bar{x})[$.
Then there is a number $\delta>0$ such that \eqref{HR} is satisfied for all $\rho\in ]0,\delta[$ and $x_i\in (\alpha\rho)\B$ $(i=1,\ldots,m)$.
Choose a positive $\delta'<\alpha\delta$.
For any $x_i\in \delta'\B \; (i=1,\ldots,m)$, it holds $\max_{1\le i\le m}\norm{x_i}<\alpha\delta$.
Pick up a $\rho\in]0,\delta[$ such that $\max_{1\le i\le m}\norm{x_i}=\alpha\rho$.
Then \eqref{HR} yields
$$
\alpha d\left(\bar x,\bigcap_{i=1}^m(\Omega_i-x_i)\right)\le \alpha\rho=\max_{1\le i\le m}\norm{x_i}.
$$
This implies $\xi\ge \alpha$. Since $\alpha$ can be arbitrarily close to $\theta[\bold{\Omega}](\bar{x})$, we deduce
$\xi\ge \theta[\bold{\Omega}](\bar{x})$.

\emph{Equality \eqref{errcon'}}.
Let $\xi$ stand for the \RHS\ of \eqref{errcon'}.
Suppose that $\xi>0$ and fix an arbitrary number $\alpha\in ]0,\xi[$. Then there is a number $\delta>0$ such that
\begin{equation*}
\alpha d\left(x,\bigcap_{i=1}^m\Omega_i\right) \le \max_{1\le i\le m} d(x,\Omega_i),\; \forall x \in B_{\delta}(\bar{x}).
\end{equation*}
If $x\in\bigcap_{i=1}^m\left(\Omega_i+ (\alpha\rho)\B \right)\bigcap B_{\delta}(\bar x)$ for some $\rho\in ]0,\delta[$, then $\max_{1\le i\le m} d(x,\Omega_i)\le\alpha\rho$, and consequently $d\left(x,\bigcap_{i=1}^m\Omega_i\right) \le\rho$, i.e., $\zeta_{\rho,\delta}[\bold{\Omega}](\bar{x}) \ge \alpha\rho$.
Hence, $\zeta[\bold{\Omega}](\bar{x})\ge \alpha$. Since $\alpha$ can be arbitrarily close to $\xi$, we obtain $\zeta[\bold{\Omega}](\bar{x})\ge \xi$.

Conversely, suppose that $\zeta[\bold{\Omega}](\bar{x})>0$ and fix any $\alpha\in ]0,\zeta[\bold{\Omega}](\bar{x})[$.
Then there is a number $\delta>0$ such that
\eqref{Hlr'} is satisfied for all $\rho\in ]0,\delta[$.
Choose a positive number $\delta'<\min\{\alpha\delta,\delta\}$.
For any $x \in B_{\delta'}(\bar{x})$, it holds
$$
\max_{1\le i\le m}d(x,\Omega_i)\le \norm{x-\bar x}\le \delta'<\alpha\delta.
$$
Choose a $\rho\in]0,\delta[$ such that $\max_{1\le i\le m}d(x,\Omega_i)=\alpha\rho$.
Then, by \eqref{Hlr'},
$$
\alpha d\left(x,\bigcap_{i=1}^m\Omega_i\right) \le \alpha\rho =\max_{1\le i\le m}d(x,\Omega_i).
$$
Hence, $\alpha\le\xi$.
By letting $\alpha\to\zeta[\bold{\Omega}](\bar{x})$, we obtain $\zeta[\bold{\Omega}](\bar{x})\le\xi$.

\emph{Equality \eqref{vartheta}} has been proved in \cite[Theorem 1]{Kru05.1}.
\qed
\end{proof}

Propositions~\ref{theorem11} and \ref{pr2} imply equivalent metric characterizations of the regularity properties of collections of sets.

\begin{thm}\label{T2.1}
\begin{enumerate}
\item
$\bold{\Omega}$ is semiregular at $\bar x$ if and only if there exist  positive numbers $\gamma$ and $\delta$ such that
\begin{equation}\label{Hri}
    \gamma d\left(\bar x,\bigcap_{i=1}^m(\Omega_i-x_i)\right) \le \max_{1\le i\le m}\norm{x_i},\; \forall x_i \in \delta\B \; (i=1,\ldots,m).
\end{equation}
Moreover, $\theta[\bold{\Omega}](\bar{x})$ is the exact upper bound of all numbers $\gamma$ such that (\ref{Hri}) is satisfied.
\item
$\bold{\Omega}$ is subregular at $\bar x$ if and only if there exist  positive numbers $\gamma$ and $\delta$ such that
\begin{equation}\label{Hlr}
    \gamma d\left(x,\bigcap_{i=1}^m\Omega_i\right) \le \max_{1\le i\le m} d(x,\Omega_i),\; \forall x \in B_{\delta}(\bar{x}).
\end{equation}
Moreover, $\zeta[\bold{\Omega}](\bar{x})$ is the exact upper bound of all numbers $\gamma$ such that (\ref{Hlr}) is satisfied.
\item
$\bold{\Omega}$ is uniformly regular at $\bar x$ if and only if there exist  positive numbers $\gamma$ and $\delta$ such that
\begin{equation}\label{umi}
    \gamma d\left(x,\bigcap_{i=1}^m(\Omega_i-x_i)\right) \le \max_{1\le i\le m}d(x+x_i,\Omega_i)
\end{equation}
for any $x \in B_{\delta}(\bar{x})$, $x_i \in \delta\B \; (i=1,\ldots,m)$.
Moreover, $\hat\theta[\bold{\Omega}](\bar{x})$ is the exact upper bound of all numbers $\gamma$ such that (\ref{umi}) is satisfied.
\end{enumerate}
\end{thm}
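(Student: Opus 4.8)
The plan is to obtain Theorem~\ref{T2.1} by merging Proposition~\ref{theorem11}, which asserts that each of the three regularity properties holds iff the corresponding constant $\theta[\bold{\Omega}](\bar x)$, $\zeta[\bold{\Omega}](\bar x)$, $\hat\theta[\bold{\Omega}](\bar x)$ is strictly positive and that this constant is the exact upper bound of the admissible moduli in Definition~\ref{HolderUR}, with Proposition~\ref{pr2}, which expresses each of these constants as a lower limit of a metric ratio (equalities \eqref{newcon}, \eqref{errcon'}, \eqref{vartheta}). The only additional ingredient is the elementary observation that, for a nonnegative quantity depending on a point running to a limit, its $\liminf$ equals the exact upper bound of those $\gamma>0$ for which the quantity stays $\ge\gamma$ once the running point is close enough to the limit; translating ``close enough'' into the balls $\delta\B$ and $B_\delta(\bar x)$ occurring in \eqref{Hri}, \eqref{Hlr}, \eqref{umi} then yields the claim.

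I would spell this out for semiregularity and indicate that the other two items are verbatim analogues. By \eqref{newcon}, $\theta[\bold{\Omega}](\bar x)$ equals the lower limit $\eta$ of $\max_{1\le i\le m}\norm{x_i}\,/\,d\big(\bar x,\bigcap_{i=1}^m(\Omega_i-x_i)\big)$ as $x_i\to0$ over tuples with $\bar x\notin\bigcap_{i=1}^m(\Omega_i-x_i)$. Let $\xi$ be the exact upper bound of all $\gamma>0$ for which \eqref{Hri} holds with some $\delta>0$. If $0<\gamma<\eta$, the definition of the lower limit provides $\delta>0$ with $\gamma\, d\big(\bar x,\bigcap_{i=1}^m(\Omega_i-x_i)\big)\le\max_{1\le i\le m}\norm{x_i}$ whenever $x_i\in\delta\B$ and $\bar x\notin\bigcap_{i=1}^m(\Omega_i-x_i)$; for the remaining $x_i\in\delta\B$ the distance $d\big(\bar x,\bigcap_{i=1}^m(\Omega_i-x_i)\big)$ vanishes, so \eqref{Hri} is trivially true; hence \eqref{Hri} holds and $\xi\ge\gamma$, and letting $\gamma\uparrow\eta$ gives $\xi\ge\eta$. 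Conversely, if \eqref{Hri} holds for some $\gamma,\delta>0$, every tuple with $x_i\to0$ eventually lies in $\delta\B$, so the ratio is $\ge\gamma$ along the limiting process and $\eta\ge\gamma$; taking the supremum over admissible $\gamma$ yields $\eta\ge\xi$. Thus $\xi=\eta=\theta[\bold{\Omega}](\bar x)$, and Proposition~\ref{theorem11}(i) turns ``$\theta[\bold{\Omega}](\bar x)>0$'' into ``$\bold{\Omega}$ is semiregular at $\bar x$'', giving both the equivalence and the fact that $\theta[\bold{\Omega}](\bar x)$ is the exact upper bound of the $\gamma$'s in \eqref{Hri}.

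For subregularity I would rerun the argument starting from \eqref{errcon'}, with running point $x\to\bar x$, ratio $\max_{1\le i\le m}d(x,\Omega_i)\,/\,d\big(x,\bigcap_{i=1}^m\Omega_i\big)$, trivial locus $x\in\bigcap_{i=1}^m\Omega_i$, and the neighbourhood $B_\delta(\bar x)$ of \eqref{Hlr}, invoking Proposition~\ref{theorem11}(ii); for uniform regularity, from \eqref{vartheta}, with running pair $(x,x_i)\to(\bar x,0)$, ratio $\max_{1\le i\le m}d(x+x_i,\Omega_i)\,/\,d\big(x,\bigcap_{i=1}^m(\Omega_i-x_i)\big)$, trivial locus $x\in\bigcap_{i=1}^m(\Omega_i-x_i)$, and the product neighbourhood in \eqref{umi}, invoking Proposition~\ref{theorem11}(iii). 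Since Proposition~\ref{pr2} already performs the genuine work of rewriting the intersection and inclusion conditions of Definition~\ref{HolderUR} as metric ratios, I do not expect a real obstacle here; the only points requiring care are the purely formal ones of matching the ``eventually'' quantifier hidden in each lower limit with the explicit radii in \eqref{Hri}--\eqref{umi}, and of recording that the asserted inequalities hold automatically on the (excluded) locus where the denominator distance is zero, so that excluding that locus, as Proposition~\ref{pr2} does, costs nothing.
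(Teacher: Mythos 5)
Your proposal is correct and follows exactly the paper's route: the paper derives Theorem~\ref{T2.1} in one line as an immediate consequence of Propositions~\ref{theorem11} and \ref{pr2}, and your elaboration of the elementary bridge between the $\liminf$ representations and the exact upper bound of admissible $\gamma$ (including the handling of the excluded locus where the denominator vanishes) is just the detail the paper leaves implicit. Nothing is missing.
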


\begin{rem}
Property (\ref{Hlr}) in the above theorem (also known as local \emph{linear regularity}, \emph{linear coherence}, or \emph{metric inequality}) has been around for more than 20 years; cf. \cite{AusDanThi05,BakDeuLi05,BauBor93,BauBor96,BauBorLi99, BauBorTse00,BurDeng05,Iof89,Iof00_,KlaLi99,LewPan98,
LiNahSin00,LiNgPon07,NgaiThe01,Pen13,ZheNg08,ZheWeiYao10}.
It has been used as a key condition when establishing linear convergence rates of sequences generated by cyclic projection algorithms and a qualification condition for subdifferential and normal cone calculus formulae.
The stronger property (\ref{umi}) is sometimes referred to as \emph{uniform metric inequality} \cite{Kru05.1,Kru06.1,Kru09.1}.
Property (\ref{Hri}) seems to be new.
\end{rem}

\section{Dual Characterizations}\label{S4}

This section discusses dual characterizations of regularity properties of a collection of sets $\bold{\Omega}:=\{\Omega_1,\ldots,\Omega_m\}$ at $\bar x\in \bigcap_{i=1}^m\Omega_i$.
We are going to use the notation ${\widehat\Omega:= \Omega_1\times\ldots\times\Omega_m\subset X^m}$.
\sloppy

Recall that the (normalized) \emph{duality mapping} \cite[Definition~3.2.6]{Lucc06} $J$ between a normed space $Y$ and its dual $Y^*$ is defined as
$$
J(y):=\left\{y^*\in \mathbb{S}_{Y^*}:\; \langle y^*,y\rangle=\norm{y}\right\},\; \forall y\in Y.
$$
Note that $J(-y)=-J(y)$.

The following simple fact of convex analysis is well known (cf., e.g., \cite[Corollary~2.4.16]{Zal02}).

\begin{lem}\label{ll01}
Let $(Y,\|\cdot\|)$ be a normed space.
\begin{enumerate}
\item
$\sd\|\cdot\|(y)=J(y)$ for any $y\ne0$.
\item
$\sd\|\cdot\|(0)=\B^*$.
\end{enumerate}
\end{lem}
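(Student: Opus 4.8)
The statement to prove is Lemma~\ref{ll01}: that the Fréchet (here actually convex) subdifferential of the norm function on a normed space $Y$ equals the duality mapping $J(y)$ at any $y\ne 0$, and equals the closed dual unit ball $\B^*$ at $y=0$. Since the norm is a convex continuous function, its Fréchet subdifferential coincides with the convex subdifferential, so I would phrase everything in terms of the convex subdifferential inequality $\langle y^*, u - y\rangle \le \|u\| - \|y\|$ for all $u \in Y$.

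The plan is to handle the two cases separately and directly from the definition. For part (ii), at $y = 0$ the subdifferential inequality reads $\langle y^*, u\rangle \le \|u\|$ for all $u \in Y$; replacing $u$ by $-u$ gives $|\langle y^*,u\rangle| \le \|u\|$, which is exactly the statement $\|y^*\|_* \le 1$, i.e. $y^* \in \B^*$; conversely every $y^*\in\B^*$ satisfies this inequality, so $\sd\|\cdot\|(0) = \B^*$. For part (i), fix $y \ne 0$. If $y^* \in J(y)$, then $\|y^*\|_* = 1$ and $\langle y^*, y\rangle = \|y\|$, so for any $u$ we get $\langle y^*, u - y\rangle = \langle y^*,u\rangle - \|y\| \le \|y^*\|_*\|u\| - \|y\| = \|u\| - \|y\|$, which is the subdifferential inequality; hence $J(y) \subseteq \sd\|\cdot\|(y)$. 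For the reverse inclusion, take $y^* \in \sd\|\cdot\|(y)$. Putting $u = 2y$ in the inequality gives $\langle y^*, y\rangle \le \|y\|$, and putting $u = 0$ gives $-\langle y^*, y\rangle \le -\|y\|$, hence $\langle y^*, y\rangle = \|y\|$. Then for arbitrary $u$, $\langle y^*, u\rangle \le \|u\| - \|y\| + \langle y^*, y\rangle = \|u\|$, and again replacing $u$ by $-u$ yields $\|y^*\|_* \le 1$; combined with $\langle y^*, y\rangle = \|y\| = \|y^*\|_*\|y\|$ forcing $\|y^*\|_* \ge 1$ (since $y\ne0$), we get $\|y^*\|_* = 1$, so $y^* \in J(y)$.

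There is no real obstacle here — this is a textbook fact and the paper itself flags it as "well known" with a reference. The only thing to be mildly careful about is the identification of the Fréchet subdifferential $\sd$ (as defined in the Notation section via a $\liminf$ condition) with the convex subdifferential: for a convex function the Fréchet subdifferential inequality $\liminf_{u\to y}\frac{\|u\|-\|y\|-\langle y^*,u-y\rangle}{\|u-y\|}\ge 0$ is equivalent to the global inequality $\langle y^*,u-y\rangle\le\|u\|-\|y\|$ for all $u$, by a standard convexity/homogeneity argument along the ray from $y$ through $u$. I would either invoke this equivalence as standard convex analysis or, since the paper cites \cite[Corollary~2.4.16]{Zal02}, simply defer to that reference and present the short verification above as the self-contained argument. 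Given the lemma's elementary nature, I expect the cleanest presentation is just to cite the reference and not reprove it, but the few-line derivation sketched above is the content if one wants it explicit.
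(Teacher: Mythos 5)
Your proof is correct. The paper gives no argument of its own for this lemma --- it simply labels it a well-known fact of convex analysis and cites \cite[Corollary~2.4.16]{Zal02} --- so your elementary verification from the inequality $\langle y^*,u-y\rangle\le\|u\|-\|y\|$ (together with the standard identification of the Fr\'echet and convex subdifferentials for convex continuous functions, which you rightly flag) is precisely the content that the citation stands in for, and your closing remark that the cleanest presentation is to defer to the reference matches what the authors actually do.
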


Making use of the convention that the topology in $X^m$ is defined by the maximum type norm,
it is not difficult to establish a representation of the duality mapping on $X^m$.
\begin{prop}\label{prJ}
For each $(x_1,\ldots,x_m)\in X^m$,
\begin{multline*}
J(x_1,\ldots,x_m)= \Biggl\{(x_1^*,\ldots,x_m^*)\in(X^*)^m:\; \sum_{i=1}^m\|x_i^*\|=1;\mbox{ either } x_i^*=0\\
\mbox{or }\left(\|x_i\|=\max_{1\le j\le m} \|x_j\|,\; x_i^*\in\|x_i^*\| J(x_i)\right)\; (i=1,\ldots,m)\Biggr\}.
\end{multline*}
\end{prop}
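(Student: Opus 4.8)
The plan is to compute $\sd\|\cdot\|_{X^m}(x_1,\ldots,x_m)$ directly from Lemma~\ref{ll01} together with the characterization of the subdifferential of a maximum of convex functions, and then match the result against the duality mapping $J$. Recall that on $X^m$ the norm is $\|(x_1,\ldots,x_m)\|=\max_{1\le i\le m}\|x_i\|$. Since each $X^*$-component of $(X^m)^*$ pairs with the corresponding $X$-component, and $\|(x_1^*,\ldots,x_m^*)\|_{(X^m)^*}=\sum_{i=1}^m\|x_i^*\|$ (the dual norm of the max-norm is the sum-norm), the statement $\sum_{i=1}^m\|x_i^*\|=1$ is exactly the requirement $(x_1^*,\ldots,x_m^*)\in\mathbb{S}_{(X^*)^m}$, so I only need to identify which unit-sphere functionals attain $\|(x_1,\ldots,x_m)\|$.

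First I would dispose of the case $(x_1,\ldots,x_m)=0$: then both sides equal $\B^*$ in $(X^m)^*$, i.e.\ $\{(x_1^*,\ldots,x_m^*):\sum\|x_i^*\|\le1\}$ --- wait, in the duality mapping we want only the unit sphere, so actually I should keep the genuine case $(x_1,\ldots,x_m)\ne0$ front and centre, since the Proposition asserts a formula valid for \emph{all} points and the duality mapping at $0$ is all of $\mathbb{S}_{(X^*)^m}$, which is consistent with the formula (every summand is allowed to be $0$, and the constraint $\|x_i\|=\max_j\|x_j\|=0$ is vacuous). So the formula is uniform. Next, for $(x_1,\ldots,x_m)\ne 0$, $J(x_1,\ldots,x_m)$ consists of those $(x_1^*,\ldots,x_m^*)$ with $\sum_i\|x_i^*\|=1$ and $\sum_i\langle x_i^*,x_i\rangle=\|(x_1,\ldots,x_m)\|=\max_j\|x_j\|=:M$. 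The key estimate is $\langle x_i^*,x_i\rangle\le\|x_i^*\|\,\|x_i\|\le\|x_i^*\|\,M$, so $\sum_i\langle x_i^*,x_i\rangle\le M\sum_i\|x_i^*\|=M$, with equality throughout if and only if for each $i$ with $x_i^*\ne0$ we have both $\langle x_i^*,x_i\rangle=\|x_i^*\|\,\|x_i\|$ (i.e.\ $x_i^*/\|x_i^*\|\in J(x_i)$, equivalently $x_i^*\in\|x_i^*\|J(x_i)$) and $\|x_i\|=M=\max_j\|x_j\|$. This is precisely the set described in the Proposition.

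I would present this as a short chain of equivalences: $(x_1^*,\ldots,x_m^*)\in J(x_1,\ldots,x_m)$ $\iff$ $\sum_i\|x_i^*\|=1$ and $\langle(x_1^*,\ldots,x_m^*),(x_1,\ldots,x_m)\rangle=\max_j\|x_j\|$ $\iff$ $\sum_i\|x_i^*\|=1$ and the two equality conditions above hold for every index $i$ with $x_i^*\ne0$. The only slightly delicate point --- and the one I'd flag as the main obstacle --- is the identification $\|(x_1^*,\ldots,x_m^*)\|_{(X^m)^*}=\sum_{i=1}^m\|x_i^*\|$, i.e.\ that the dual of the max-norm on a product is the $\ell^1$-sum of the dual norms; this is standard but should be invoked cleanly (it follows since $\sup_{\|x_i\|\le1\,\forall i}\sum_i\langle x_i^*,x_i\rangle=\sum_i\sup_{\|x_i\|\le1}\langle x_i^*,x_i\rangle=\sum_i\|x_i^*\|$, the suprema being independent). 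Alternatively one can avoid even mentioning the dual norm and argue entirely through the subdifferential of the max function: $\sd\|\cdot\|_{X^m}(x)=\sd(\max_i g_i)(x)$ with $g_i(x_1,\ldots,x_m)=\|x_i\|$, and the Ioffe--Tikhomirov / Dubovitskii--Milyutin formula gives the convex hull of $\bigcup_{i\in I(x)}\sd g_i(x)$ where $I(x)=\{i:\|x_i\|=M\}$ and $\sd g_i(x)=\{0\}\times\cdots\times\sd\|\cdot\|(x_i)\times\cdots\times\{0\}$; taking the convex hull and using Lemma~\ref{ll01} reproduces the stated set. Either route is routine; I would pick the first (direct pairing) route as it is the most transparent and keeps the computation to a few lines.
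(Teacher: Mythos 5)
Your proposal is correct and follows essentially the same route as the paper's proof: identify the dual norm of the max-norm as the sum-norm, reduce membership in $J(x_1,\ldots,x_m)$ to the equality $\sum_i\langle x_i^*,x_i\rangle=\max_j\|x_j\|$, and extract the componentwise conditions from the equality case of $\langle x_i^*,x_i\rangle\le\|x_i^*\|\,\|x_i\|\le\|x_i^*\|\max_j\|x_j\|$. The paper phrases this as a term-by-term equality $\langle x_i^*,x_i\rangle=\|x_i^*\|\cdot\|\hat x\|$ for all $i$, which is exactly your chain-of-inequalities argument.
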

\begin{proof}
Let $\hat x:=(x_1,\ldots,x_m)\in X^m$, $\hat x^*:=(x_1^*,\ldots,x_m^*)\in(X^*)^m$.
Then
$${\|\hat x\|=\max_{1\le i\le m} \|x_i\|},\quad \|\hat x^*\|=\sum_{i=1}^m\|x_i^*\|,\quad \langle \hat x^*,\hat x\rangle=\sum_{i=1}^m\langle x_i^*,x_i\rangle.$$
Suppose $\|\hat x^*\|=1$, i.e., $\sum_{i=1}^m\|x_i^*\|=1$.
Then $\hat x^*\in J(\hat x)$ if and only if ${\sum_{i=1}^m\langle x_i^*,x_i\rangle=\|\hat x\|}$.
In its turn, the last equality holds true if and only if ${\langle x_i^*,x_i\rangle=\|x_i^*\|\cdot\|\hat x\|}$ for all $i=1,\ldots,m$.

Indeed, if $\langle x_i^*,x_i\rangle=\|x_i^*\|\cdot\|\hat x\|$ for all $i=1,\ldots,m$, then adding these $m$ equalities, we obtain $\sum_{i=1}^m\langle x_i^*,x_i\rangle=\|\hat x\|$.
Conversely, if $\langle x_i^*,x_i\rangle\ne\|x_i^*\|\cdot\|\hat x\|$, i.e., ${\langle x_i^*,x_i\rangle<\|x_i^*\|\cdot\|\hat x\|}$ for some $i\in\{1,\ldots,m\}$, then
$$
\sum_{j=1}^m\langle x_j^*,x_j\rangle =\langle x_i^*,x_i\rangle+\sum_{j\ne i}\langle x_j^*,x_j\rangle <\|x_i^*\|\cdot\|\hat x\|+\|\hat x\|\sum_{j\ne i} \|x_j^*\|=\|\hat x\|.
$$

Finally, $\langle x_i^*,x_i\rangle=\|x_i^*\|\cdot\|\hat x\|$ for some $i\in\{1,\ldots,m\}$ if and only if either $\|x_i\|=\|\hat x\|$ and $x_i^*\in\|x_i^*\| J(x_i)$ or $x_i^*=0$.
\qed\end{proof}

In this section, along with the maximum type norm on $X^{m+1}=X\times X^m$, we are going to use another one depending on a parameter $\rho>0$ and defined as follows:
\begin{equation}\label{norm}
\norm{(x,\hat x)}_{\rho} :=\max\left\{\norm{x},\rho \norm{\hat x}\right\},\quad x\in X,\;\hat x\in X^m.
\end{equation}
It is easy to check that the corresponding dual norm has the following representation:
\begin{equation}\label{normd}
\norm{(x^*,\hat x^*)}_{\rho} =\|x^*\|+\rho^{-1}\|\hat x^*\|,\quad x^*\in X^*,\;\hat x^*\in (X^m)^*.
\end{equation}
Note that if, in \eqref{norm} and \eqref{normd}, $\hat x= (x_1,\ldots,x_m)$ and $\hat x^*= (x_1^*,\ldots,x_m^*)$ with $x_i\in X$ and $x_i^*\in X^*$ ($i=1,2,\ldots,m$), then $\|\hat x\|=\max_{1\le i\le m}\|x_i\|$ and $\|\hat x^*\|=\sum_{i=1}^m\|x_i^*\|$.

The next few facts of subdifferential calculus are used in the proof of the main theorem below.

\begin{lem}\label{ll02}
Let $X$ be a normed space and
$\varphi(u,\hat u)=\|(u-u_1,\ldots,u-u_m)\|$, $u\in X$, $\hat u:=(u_1,\ldots,u_m)\in X^m$.
Suppose $x\in X$, $\hat x:=(x_1,\ldots,x_m)\in X^m$, and ${\hat v:=(x-x_1,\ldots,x-x_m)}\ne0$. Then
\begin{align*}
\sd\varphi(x,\hat x)\subseteq\bigl\{\left(x^*,\hat x^* =(x_1^*,\ldots,x_m^*)\right)\in &X^*\times (X^*)^m :\;\\
&-\hat x^*\in J(\hat v),\; x^*=-(x_{1}^*+\ldots+x_{m}^*)\bigr\}.
\end{align*}
\end{lem}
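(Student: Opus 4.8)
The plan is to exploit the fact that $\varphi$ is convex. Write $\varphi=\norm{\cdot}\circ A$, where $\norm{\cdot}$ is the maximum-type norm on $X^m$ and $A\colon X\times X^m\to X^m$ is the bounded linear operator $A(u,u_1,\ldots,u_m)=(u-u_1,\ldots,u-u_m)$. Thus $\varphi$ is convex and continuous, so its Fr\'echet subdifferential coincides with the subdifferential of convex analysis; in particular, for any $(x^*,\hat x^*)\in\sd\varphi(x,\hat x)$ the subgradient inequality $\varphi(u,\hat u)-\varphi(x,\hat x)\ge\langle x^*,u-x\rangle+\langle\hat x^*,\hat u-\hat x\rangle$ holds for \emph{all} $(u,\hat u)\in X\times X^m$, not merely locally. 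I would then identify $(X^m)^*$ with $(X^*)^m$ so that $\langle\hat x^*,\hat u\rangle=\sum_{i=1}^m\langle x_i^*,u_i\rangle$, and, writing $\hat v:=(x-x_1,\ldots,x-x_m)\ne0$ and $\tau(t):=(t,\ldots,t)\in X^m$ for $t\in X$, note that $\varphi(x+t,x_1+s_1,\ldots,x_m+s_m)=\norm{\hat v+\tau(t)-\hat s}$ for $\hat s=(s_1,\ldots,s_m)$.

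Substituting $u=x+t$ and $u_i=x_i+s_i$ into the subgradient inequality turns it into $\norm{\hat v+\tau(t)-\hat s}-\norm{\hat v}\ge\langle x^*,t\rangle+\langle\hat x^*,\hat s\rangle$ for all $t\in X$ and $\hat s\in X^m$, and two specializations finish the argument. First, take $t=0$: then $\norm{\hat v-\hat s}-\norm{\hat v}\ge\langle\hat x^*,\hat s\rangle$ for all $\hat s$, and replacing $\hat s$ by $-\hat s$ gives $\norm{\hat v+\hat s}-\norm{\hat v}\ge\langle-\hat x^*,\hat s\rangle$ for all $\hat s$, i.e. $-\hat x^*\in\sd\norm{\cdot}(\hat v)$, which equals $J(\hat v)$ by Lemma~\ref{ll01}(i) (applicable since $\hat v\ne0$); hence $-\hat x^*\in J(\hat v)$. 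Second, take $\hat s=\tau(t)$: the left-hand side collapses to $\norm{\hat v}-\norm{\hat v}=0$, so $0\ge\langle x^*,t\rangle+\langle\hat x^*,\tau(t)\rangle=\langle x^*+(x_1^*+\ldots+x_m^*),t\rangle$ for all $t\in X$, whence $x^*=-(x_1^*+\ldots+x_m^*)$. Combining these two conclusions gives exactly the asserted inclusion (in fact an equality, though only ``$\subseteq$'' is claimed).

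An equivalent route is to invoke the convex chain rule $\sd\varphi(x,\hat x)=A^*\,\sd\norm{\cdot}(A(x,\hat x))=A^*J(\hat v)$, valid because $\norm{\cdot}$ is continuous on all of $X^m$, and then compute the adjoint $A^*\hat w^*=(w_1^*+\ldots+w_m^*,-w_1^*,\ldots,-w_m^*)$ for $\hat w^*=(w_1^*,\ldots,w_m^*)$; unravelling $(x^*,\hat x^*)=A^*\hat w^*$ with $\hat w^*\in J(\hat v)$ yields the same formulas. Both routes are elementary. The only points requiring any care are the global validity of the subgradient inequality (which is where convexity enters) and the identification of $(X^m)^*$ with $(X^*)^m$ together with the elementary observation $\langle\hat x^*,\tau(t)\rangle=\sum_{i=1}^m\langle x_i^*,t\rangle$; there is no substantial obstacle.
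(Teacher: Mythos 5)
Your first route is essentially the paper's own proof: the paper likewise passes from $(x^*,\hat x^*)\in\sd\varphi(x,\hat x)$ to the global subgradient inequality and then makes exactly your two substitutions ($u=x$, $u_i=x_i-x_i'$ to get $-\hat x^*\in J(\hat v)$, and $u=x+x'$, $u_i=x_i+x'$ to get $x^*+\sum_i x_i^*=0$). Your explicit appeal to convexity to justify the global inequality, and the alternative chain-rule route, are correct refinements but do not change the argument.
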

\begin{proof}
Let $\left(x^*,\hat x^* =(x_1^*,\ldots,x_m^*)\right)\in \sd\varphi(x,\hat x)$, i.e.,
$$
\|(u-u_1,\ldots,u-u_m)\|-\|(x-x_1,\ldots,x-x_m)\|\ge \langle x^*,u-x\rangle+\sum_{i=1}^m\langle x_i^*,u_i-x_i\rangle
$$
for any $u\in X$ and $\hat u:=(u_1,\ldots,u_m)\in X^m$.
In particular, with $u=x$ and $u_i=x_i-x_i'$ ($i=1,\ldots,m$) for an arbitrary $\hat x':=(x_1',\ldots,x_m')\in X^m$, we have
$$
\|\hat v+\hat x'\|-\|\hat v\|\ge-\langle\hat x^*,\hat x'\rangle,
$$
i.e., $-\hat x^*\in J(\hat v)$.
Similarly, with $u=x+x'$ and $u_i=x_i+x'$ ($i=1,\ldots,m$) for an arbitrary $x'\in X$, we have
$$
\left\langle x^*+\sum_{i=1}^mx_i^*,x'\right\rangle\le0,
$$
and consequently $x^*+x_{1}^*+\ldots+x_{m}^*=0$.
\qed\end{proof}

\begin{lem}\label{ll03}
Let $X$ be a normed space and $\hat\omega:=(\omega_1,\ldots,\omega_m)\in\widehat\Omega$.
Then ${N_{\widehat\Omega}(\hat\omega) =N_{\Omega_1}(\omega_1)\times\ldots\times N_{\Omega_m}(\omega_m)}$.
\end{lem}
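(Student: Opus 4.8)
The plan is to prove the two inclusions separately, exploiting the fact (recalled just before the lemma) that $X^m$ carries the maximum-type norm, so that for $\hat u=(u_1,\ldots,u_m)$ one has $\|\hat u-\hat\omega\|=\max_{1\le i\le m}\|u_i-\omega_i\|$, while in the dual pairing $\langle\hat x^*,\hat u-\hat\omega\rangle=\sum_{i=1}^m\langle x_i^*,u_i-\omega_i\rangle$. With these two identities the assertion becomes a direct unraveling of the definitions of $N_{\widehat\Omega}$ and the $N_{\Omega_i}$.

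First I would show the inclusion $N_{\Omega_1}(\omega_1)\times\cdots\times N_{\Omega_m}(\omega_m)\subseteq N_{\widehat\Omega}(\hat\omega)$. Take $x_i^*\in N_{\Omega_i}(\omega_i)$ for each $i$ and fix $\varepsilon>0$. By the definition of the Fr\'echet normal cone there is $\delta>0$ such that $\langle x_i^*,u_i-\omega_i\rangle\le\varepsilon\|u_i-\omega_i\|$ whenever $u_i\in\Omega_i$ and $0<\|u_i-\omega_i\|\le\delta$, for every $i=1,\ldots,m$. Then for $\hat u\in\widehat\Omega$ with $0<\|\hat u-\hat\omega\|\le\delta$ (hence $\|u_i-\omega_i\|\le\delta$ for all $i$), summing over $i$ and simply discarding the indices with $u_i=\omega_i$ (which contribute $0$) gives $\langle\hat x^*,\hat u-\hat\omega\rangle\le\varepsilon\sum_{i=1}^m\|u_i-\omega_i\|\le m\varepsilon\max_{1\le i\le m}\|u_i-\omega_i\|=m\varepsilon\|\hat u-\hat\omega\|$. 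Therefore the relevant $\limsup$ is at most $m\varepsilon$; letting $\varepsilon\downarrow0$ yields $\hat x^*=(x_1^*,\ldots,x_m^*)\in N_{\widehat\Omega}(\hat\omega)$.

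For the reverse inclusion I would fix $\hat x^*=(x_1^*,\ldots,x_m^*)\in N_{\widehat\Omega}(\hat\omega)$ and an index $i$, and test the normal-cone inequality along the ``coordinate'' curves: for $u_i\in\Omega_i\setminus\{\omega_i\}$ put $\hat u:=(\omega_1,\ldots,\omega_{i-1},u_i,\omega_{i+1},\ldots,\omega_m)$. Then $\hat u\in\widehat\Omega\setminus\{\hat\omega\}$, $\hat u\to\hat\omega$ as $u_i\to\omega_i$, and both $\langle\hat x^*,\hat u-\hat\omega\rangle=\langle x_i^*,u_i-\omega_i\rangle$ and $\|\hat u-\hat\omega\|=\|u_i-\omega_i\|$. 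Consequently $\limsup_{u_i\to\omega_i,\,u_i\in\Omega_i\setminus\{\omega_i\}}\frac{\langle x_i^*,u_i-\omega_i\rangle}{\|u_i-\omega_i\|}\le\limsup_{\hat u\to\hat\omega,\,\hat u\in\widehat\Omega\setminus\{\hat\omega\}}\frac{\langle\hat x^*,\hat u-\hat\omega\rangle}{\|\hat u-\hat\omega\|}\le0$, i.e. $x_i^*\in N_{\Omega_i}(\omega_i)$; since $i$ was arbitrary, $\hat x^*\in N_{\Omega_1}(\omega_1)\times\cdots\times N_{\Omega_m}(\omega_m)$.

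There is no genuine obstacle here; the only point requiring a little care is the bookkeeping with the maximum norm in the first inclusion — namely that the indices with $u_i=\omega_i$ may be dropped and that the factor $m$ coming from the crude estimate $\sum_i\|u_i-\omega_i\|\le m\max_i\|u_i-\omega_i\|$ is harmless because $\varepsilon$ is arbitrary.
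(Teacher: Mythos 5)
Your proof is correct and is exactly the argument the paper has in mind: the paper's proof of this lemma is the one-line remark that it ``follows directly from the definition of the Fr\'echet normal cone,'' and your two inclusions (using the maximum norm on $X^m$ and the sum form of the dual pairing) are precisely the unpacking of that remark. Nothing further is needed.
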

\begin{proof}
follows directly from the definition of the Fr\'echet normal cone.
\qed\end{proof}

The proof of the main theorem of this section relies heavily on two fundamental results of variational analysis: the \emph{Ekeland variational principle} (Ekeland \cite{Eke74}; cf., e.g., \cite[Theorem~2.1]{Kru03.1}, \cite[Theorem 2.26]{Mor06.1}) and  the \emph{fuzzy} (\emph{approximate}) \emph{sum rule} (Fabian \cite{Fab89}; cf., e.g., \cite[Rule~2.2]{Kru03.1}, \cite[Theorem 2.33]{Mor06.1}).
Below we provide these results for completeness.

\begin{lem}[Ekeland variational principle] \label{l01}
Suppose $X$ is a complete metric space, and $f: X\to\R_\infty$ is lower semicontinuous and bounded from below, $\varepsilon>0, \lambda>0$. If
$$
f(v)<\inf_X f + \varepsilon,
$$
then there exists $x\in X$ such that

(a) $d(x,v)<\lambda $,

(b) $f(x)\le f(v)$,

(c) $f(u)+(\varepsilon/\lambda)d(u,x)\ge f(x)$ for all $u\in X$.
\end{lem}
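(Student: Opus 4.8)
The plan is to use the classical partial-order argument. On $X$ introduce the relation $u\preceq w \iff f(u)+(\varepsilon/\lambda)\,d(u,w)\le f(w)$. One checks at once that $\preceq$ is reflexive, transitive (via the triangle inequality for $d$), and antisymmetric on the set where $f$ is finite — which is all that is needed, since $u\preceq w$ already forces $f(u)\le f(w)$. The key observation is that the point $x$ we seek is exactly a $\preceq$-minimal element lying below $v$: if $x\preceq v$ then $f(x)+(\varepsilon/\lambda)d(x,v)\le f(v)$, which immediately yields (b) and, combined with the hypothesis $f(v)<\inf_X f+\varepsilon$, also (a), because then $(\varepsilon/\lambda)d(x,v)\le f(v)-\inf_X f<\varepsilon$, i.e. $d(x,v)<\lambda$; and $\preceq$-minimality of $x$ means there is no $u\ne x$ with $f(u)+(\varepsilon/\lambda)d(u,x)\le f(x)$, which is precisely assertion (c).

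To construct such an $x$, I would build a nested family of sets. Put $v_0:=v$ and, having chosen $v_n$, set $S_n:=\{u\in X:\ u\preceq v_n\}$. Then $v_n\in S_n$, so $S_n\ne\emptyset$, and $S_n$ is closed, since $u\mapsto f(u)+(\varepsilon/\lambda)d(u,v_n)-f(v_n)$ is lower semicontinuous (here the lower semicontinuity of $f$ is used). Transitivity of $\preceq$ gives $S_{n+1}\subseteq S_n$. Now choose $v_{n+1}\in S_n$ with $f(v_{n+1})\le\inf_{S_n}f+2^{-n}$, which is possible because $f$ is bounded below and hence $\inf_{S_n}f$ is finite. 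For any $u\in S_{n+1}$ we have $u\preceq v_{n+1}$, whence $(\varepsilon/\lambda)d(u,v_{n+1})\le f(v_{n+1})-f(u)\le f(v_{n+1})-\inf_{S_n}f\le 2^{-n}$, so that $\operatorname{diam}S_{n+1}\le 2^{-n+1}\lambda/\varepsilon\to 0$.

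The sets $S_n$ are thus nonempty, closed, nested, with diameters tending to zero, and $X$ is complete, so Cantor's intersection theorem yields $\bigcap_{n\ge0}S_n=\{x\}$ for a unique $x\in X$. In particular $x\in S_0$, i.e. $x\preceq v$, which gives (a) and (b) as above. For (c), suppose some $u\in X$ satisfied $f(u)+(\varepsilon/\lambda)d(u,x)<f(x)$; then $u\preceq x$ and $u\ne x$. Since $x\in S_n$, i.e. $x\preceq v_n$, transitivity gives $u\preceq v_n$, hence $u\in S_n$ for every $n$, so $u\in\bigcap_n S_n=\{x\}$, contradicting $u\ne x$. This proves (c).

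The only genuinely delicate points are verifying that each $S_n$ is closed — this is exactly where lower semicontinuity of $f$ enters — and arranging that $\operatorname{diam}S_n\to 0$, for which the choice of $v_{n+1}$ as a near-infimizer of $f$ over $S_n$ is essential; completeness of $X$ is then invoked precisely once, to pass from the nested closed sets to their single common point. Everything else is routine bookkeeping with the order $\preceq$.
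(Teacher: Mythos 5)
Your argument is correct and complete: the order $u\preceq w \iff f(u)+(\varepsilon/\lambda)d(u,w)\le f(w)$ is indeed reflexive, transitive and antisymmetric where $f$ is finite; each $S_n$ is nonempty and closed by lower semicontinuity of $f$ and continuity of $d(\cdot,v_n)$; the near-infimizing choice of $v_{n+1}$ forces $\mathrm{diam}\,S_{n+1}\le 2^{-n+1}\lambda/\varepsilon\to0$; and Cantor's intersection theorem in the complete space $X$ produces the unique common point $x$, from which (a), (b) follow via $x\preceq v$ and (c) via minimality. Note, however, that the paper does not prove this lemma at all: it is quoted verbatim from the literature (Ekeland's original paper and standard monographs) purely "for completeness" before being applied in the proof of Proposition 4.2. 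So there is nothing to compare against; what you have written is the classical Cantor-intersection (equivalently, Bishop--Phelps-type partial order) proof, and it is sound, including the two delicate points you flag — closedness of the sets $S_n$ and the decay of their diameters.
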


\begin{lem}[Fuzzy sum rule] \label{l02}
Suppose $X$ is Asplund,
$f_1: X \to\R$ is Lipschitz continuous and
$f_2: X \to\R_\infty$
is lower semicontinuous in a neighbourhood of $\bar x$ with $f_2(\bar x)<\infty$.
Then, for any $\varepsilon>0$, there exist $x_1,x_2\in X$ with $\|x_i-\bar x\|<\varepsilon$, ${|f_i(x_i)-f_i(\bar x)|<\varepsilon}$ $(i=1,2)$ such that
$$
\partial (f_1+f_2) (\bar x) \subset \partial f_1(x_1) +\partial f_2(x_2) + \varepsilon\B^\ast.
$$
\end{lem}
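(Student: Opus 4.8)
The plan is to establish Lemma~\ref{l02} along the classical lines of Asplund-space variational analysis: reduce to a local-minimum statement and to a separable space, \emph{decouple} the two summands by passing to the product space $X\times X$, apply a variational principle to obtain an exact local minimiser of the decoupled functional, and then read the two Fr\'echet subgradients off the optimality condition. The Asplund hypothesis enters, essentially, through separable reduction together with a smooth variational principle, and that is where the real difficulty lies.

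First I would normalise. Fix $x^*\in\partial(f_1+f_2)(\bar x)$ and $\varepsilon\in\,]0,1[$. Replacing $f_1$ by $f_1-\langle x^*,\cdot\rangle$, which remains Lipschitz near $\bar x$, and translating, we may assume $\bar x=0$, $f_1(0)=f_2(0)=0$ and $0\in\partial(f_1+f_2)(0)$; it then suffices to produce $x_1,x_2$ near $0$ with $f_i(x_i)$ near $0$ and $x_i^*\in\partial f_i(x_i)$ such that $\|x_1^*+x_2^*\|<\varepsilon$. By the definition of the Fr\'echet subdifferential there are $\delta>0$, $L>0$ and a small $\eta>0$ (depending only on $\varepsilon$) with $f_1$ $L$-Lipschitz on $\delta\B$, $f_2$ lower semicontinuous there, and $f_1(u)+f_2(u)+\eta\|u\|\ge0$ for all $u\in\delta\B$. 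At this point I would invoke \emph{separable reduction}: it suffices to treat the case when $X$ is separable, and then, $X$ being Asplund, $X^*$ is separable, so $X$ admits an equivalent norm whose square is of class $C^1$; we may thus assume in addition that $\|\cdot\|^2$ is $C^1$ on $X$.

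Next I would decouple. On $X\times X$, equipped with a smooth norm of the above type, put for a large parameter $k$
\[
G_k(u,v):=f_1(u)+f_2(v)+k\|u-v\|^2,
\]
a lower semicontinuous, bounded-below function. Since $f_1$ is $L$-Lipschitz, minimising $t\mapsto kt^2-Lt$ gives $f_1(u)+k\|u-v\|^2\ge f_1(v)-L^2/(4k)$, hence $G_k\ge f_1(v)+f_2(v)-L^2/(4k)\ge-\eta\delta-L^2/(4k)$ on $\delta\B\times\delta\B$, while $G_k(0,0)=0$; so $(0,0)$ is an approximate minimiser of $G_k$ over the complete ball $\delta\B\times\delta\B$, with residual controlled by $\eta$ and $1/k$. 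I would then apply a variational principle --- Ekeland's principle (Lemma~\ref{l01}), or, in order to keep the perturbing term Fr\'echet differentiable, a smooth (Borwein--Preiss type) variational principle, which is now available since $\|\cdot\|^2$ is $C^1$ --- to obtain a point $(u_k,v_k)$ close to $(0,0)$ and a $C^1$ function $\Delta$ whose gradient at $(u_k,v_k)$ is controlled by the residual of the approximate minimisation (hence of norm below $\varepsilon$ once $\eta$ is small and $k$ large) such that $G_k+\Delta$ attains a local minimum at $(u_k,v_k)$; one simultaneously keeps track that, as the parameters are refined, $u_k,v_k\to0$, $\|u_k-v_k\|\to0$ and $f_1(u_k),f_2(v_k)\to0$.

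The assertion is then extracted from $0\in\partial(G_k+\Delta)(u_k,v_k)$. The map $(u,v)\mapsto k\|u-v\|^2$ and $\Delta$ are $C^1$ and $(u,v)\mapsto f_1(u)+f_2(v)$ separates the two variables, so the elementary (non-circular) calculus for a $C^1$ perturbation of such a separated sum produces $x_1^*\in\partial f_1(u_k)$ and $x_2^*\in\partial f_2(v_k)$ such that $(x_1^*,x_2^*)$ plus the gradient of $k\|u-v\|^2$ at $(u_k,v_k)$ plus $\nabla\Delta(u_k,v_k)$ sums to zero; the penalty $k\|u-v\|^2$ is symmetric, so its gradient has opposite first and second coordinates, which cancel when the two coordinates of this relation are added, leaving $\|x_1^*+x_2^*\|\le c\,\|\nabla\Delta(u_k,v_k)\|<\varepsilon$ (the constant $c$ coming from the chosen norm on $X\times X$, and absorbed by shrinking $\eta$). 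Setting $x_1:=u_k$, $x_2:=v_k$ and undoing the normalisation gives the inclusion. The main obstacle is precisely the step just used --- the separable reduction together with the smooth variational principle: without compactness of bounded sets there is no other way to pass from an approximate minimiser to a point carrying a small Fr\'echet subgradient, and the very validity of this fuzzy calculus is characteristic of Asplund spaces (Fabian~\cite{Fab89}). For this reason it is natural to quote Lemma~\ref{l02} as a black box, the above merely indicating how Lemma~\ref{l01} and the Asplund assumption come in.
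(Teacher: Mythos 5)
The paper does not prove this lemma at all: it is quoted verbatim as a classical result of Fabian~\cite{Fab89}, with pointers to \cite[Rule~2.2]{Kru03.1} and \cite[Theorem~2.33]{Mor06.1}, so there is no in-paper argument to compare yours against. Your outline is the standard proof from exactly those sources --- normalisation, decoupling on $X\times X$ via the penalty $k\|u-v\|^2$, an (approximate or smooth) variational principle, and cancellation of the symmetric penalty gradients --- and the reductions you perform (subtracting $\langle x^*,\cdot\rangle$, the estimate $f_1(u)+k\|u-v\|^2\ge f_1(v)-L^2/(4k)$, the product formula for the Fr\'echet subdifferential of a separated sum) are all correct. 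Be aware, though, that the two steps you flag as the ``real difficulty'' are genuinely the whole content of the theorem and are not carried out: the separable reduction (showing that a Fr\'echet subgradient inclusion in $X$ can be tested inside a suitably constructed separable subspace) is a delicate construction, and a separable Asplund space gives you a Fr\'echet-smooth bump and hence the Deville--Godefroy--Zizler or Borwein--Preiss smooth variational principle, rather than literally an equivalent norm with globally $C^1$ square; also, the value control $f_2(v_k)\to f_2(\bar x)$ needs the non-increase of the perturbed value from the variational principle combined with lower semicontinuity, as you indicate but do not verify. As a sketch indicating where the Asplund hypothesis enters, your proposal is accurate and consistent with the paper's treatment of the lemma as a black box.
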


The next theorem gives dual sufficient conditions for regularity of collections of sets.

\begin{thm}\label{T3.1}
Let $X$ be an Asplund space and $\Omega_1$, \ldots, $\Omega_m$ be closed.
\begin{enumerate}
\item
$\bold{\Omega}$ is subregular at $\bar x$ if there exist positive numbers $\alpha$ and $\delta$ such that, for any $\rho\in ]0,\delta[$, $x\in B_{\rho}(\bar x)$, $\omega_i\in\Omega_i\cap B_{\rho}(x)$ $(i=1,\ldots,m)$ with $\omega_i\neq x$ for some ${i\in\{1,\ldots,m\}}$, there is an $\varepsilon>0$ such that, for any $x'\in B_{\varepsilon}(x)$, $\omega'_i\in\Omega_i\cap B_{\varepsilon}(\omega_i)$, $x_{i}^*\in N_{\Omega_i}(\omega_i') +\rho\B^*$ $(i=1,\ldots,m)$
satisfying
\begin{gather}\notag
x_i^*=0
\quad\mbox{if}\quad
\norm{x'-\omega_i'}<\max_{1\le j\le m} \norm{x'-\omega_j'},
\\\notag
\langle x_i^*,x'-\omega_i'\rangle
\ge\|x_i^*\|(\|x'-\omega_i'\|-\varepsilon),
\\\label{=1}
\sum_{i=1}^m\|x_i^*\|=1,
\end{gather}
it holds
\begin{gather}\label{>al}
\left\|\sum\limits_{i=1}^m{x}_i^*\right\|>\alpha.
\end{gather}
\item
$\bold{\Omega}$ is uniformly regular at $\bar x$ if and only if there are positive numbers $\alpha$ and $\delta$ such that
\eqref{>al} holds true
for all $\omega_i\in\Omega_i\cap{B}_\delta(\bar{x})$ and
$x_i^*\in{N}_{\Omega_i}(\omega_i)$ $(i=1,\ldots,m)$ satisfying \eqref{=1}. \end{enumerate}
\end{thm}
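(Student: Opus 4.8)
The plan is to read off all three assertions from the metric characterisations of Theorem~\ref{T2.1}: the "sufficiency'' implications via the standard lift to the product space $X^{m+1}$ combined with the Ekeland variational principle (Lemma~\ref{l01}) and the fuzzy sum rule (Lemma~\ref{l02}), and the single elementary implication -- the "only if'' part of~(ii) -- directly. For the latter, suppose $\bold{\Omega}$ is uniformly regular and fix $\gamma,\delta>0$ as in \eqref{umi}. Given $\omega_i\in\Omega_i\cap B_\delta(\bar x)$ and $x_i^*\in N_{\Omega_i}(\omega_i)$ with $\sum_i\norm{x_i^*}=1$, write $\beta:=\norm{\sum_i x_i^*}$, choose for each $i$ with $x_i^*\ne0$ a unit vector $e_i$ with $\langle x_i^*,e_i\rangle>(1-\varepsilon)\norm{x_i^*}$ (and $e_i:=0$ otherwise), and for small $t>0$ put $x:=\omega_1$, $x_i:=\omega_i+te_i-x$ $(i=1,\ldots,m)$. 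Then $d(x+x_i,\Omega_i)\le t$; on the other hand, any $z\in\bigcap_i(\Omega_i-x_i)$ close to $x$ satisfies $z-x+te_i\in\Omega_i-\omega_i$, so the definition of $x_i^*\in N_{\Omega_i}(\omega_i)$ gives $\langle x_i^*,z-x\rangle\le\varepsilon\norm{z-x+te_i}-t\langle x_i^*,e_i\rangle$; summing over $i$ and using $\langle\sum_i x_i^*,z-x\rangle\ge-\beta\norm{z-x}$ yields $d(x,\bigcap_i(\Omega_i-x_i))\ge\frac{1-(m+1)\varepsilon}{\beta+m\varepsilon}\,t$ for $t$ small. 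Substituting the two estimates into \eqref{umi} and letting $t\downarrow0$ and then $\varepsilon\downarrow0$ forces $\beta\ge\gamma$, so \eqref{>al} holds with, say, $\alpha:=\gamma/2$ and the same $\delta$.

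For part~(i), by Theorem~\ref{T2.1}(ii) it is enough to produce $\gamma,\delta'>0$ with $\gamma\,d(x,\bigcap_i\Omega_i)\le\max_i d(x,\Omega_i)$ for all $x\in B_{\delta'}(\bar x)$. Fix such an $x\notin\bigcap_i\Omega_i$, set $r:=\max_i d(x,\Omega_i)>0$, and on $X^{m+1}$ consider $f(u,\hat u):=\varphi(u,\hat u)+\iota_{\widehat\Omega}(\hat u)$, where $\varphi$ is the Lipschitz function of Lemma~\ref{ll02} and $\iota_{\widehat\Omega}$ the indicator of $\widehat\Omega$. Since $\inf_{\hat u}f(u,\hat u)=\max_i d(u,\Omega_i)$ and $\inf f=0$ (attained at $\bar x$), one has $f(x,\hat\omega)<r+\varepsilon_0$ for a suitable $\hat\omega=(\omega_1,\ldots,\omega_m)\in\widehat\Omega$ and any $\varepsilon_0>0$. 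Fix a small $\rho\in]0,\delta[$ and a number $\kappa\in]0,\min\{\alpha,1\}[$, and apply Lemma~\ref{l01} to $f$ with respect to the norm \eqref{norm}: since $f(x,\hat\omega)<\inf f+(r+\varepsilon_0)$, with radius $(r+\varepsilon_0)/\kappa$ we obtain $(\tilde x,\tilde\omega)$ with $\norm{(x,\hat\omega)-(\tilde x,\tilde\omega)}_{\rho}<(r+\varepsilon_0)/\kappa$, minimising $(u,\hat u)\mapsto f(u,\hat u)+\kappa\norm{(u,\hat u)-(\tilde x,\tilde\omega)}_{\rho}$. If $f(\tilde x,\tilde\omega)=0$ then $\tilde x=\tilde\omega_1=\cdots=\tilde\omega_m\in\bigcap_i\Omega_i$, so $d(x,\bigcap_i\Omega_i)\le\norm{x-\tilde x}<(r+\varepsilon_0)/\kappa$; letting $\varepsilon_0\downarrow0$ and $\kappa\uparrow\min\{\alpha,1\}$ gives the desired inequality with $\gamma:=\min\{\alpha,1\}$.

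It remains to exclude the case $f(\tilde x,\tilde\omega)>0$. Then, applying Lemma~\ref{l02} to $\varphi+\iota_{\widehat\Omega}$ at $(\tilde x,\tilde\omega)$ with a small enough tolerance $\varepsilon'>0$ (in particular $\varepsilon'<f(\tilde x,\tilde\omega)/2$, which keeps $\hat v':=(x'-\omega_1',\ldots,x'-\omega_m')\ne0$), we get $x'$ near $\tilde x$, $\hat\omega'=(\omega_1',\ldots,\omega_m')\in\widehat\Omega$ near $\tilde\omega$, $(w^*,\hat w^*)\in\sd\varphi(x',\hat\omega')$ and $\hat n^*\in N_{\widehat\Omega}(\hat\omega')$ with $\norm{w^*}+\rho^{-1}\norm{\hat w^*+\hat n^*}<\kappa+\varepsilon'$ (cf.~\eqref{normd}). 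Setting $x_i^*:=-w_i^*$, Lemmas~\ref{ll02}--\ref{ll03} and Proposition~\ref{prJ} yield $\sum_i\norm{x_i^*}=1$, $x_i^*=0$ whenever $\norm{x'-\omega_i'}<\max_j\norm{x'-\omega_j'}$, $\langle x_i^*,x'-\omega_i'\rangle=\norm{x_i^*}\norm{x'-\omega_i'}$, $x_i^*\in N_{\Omega_i}(\omega_i')+\rho\B^*$ (using $\kappa+\varepsilon'\le1$), and $\norm{\sum_i x_i^*}=\norm{w^*}<\kappa+\varepsilon'$. For $\delta'$ and $\varepsilon_0$ small enough the points $\tilde x,\tilde\omega_i,x',\omega_i'$ lie in the balls occurring in the hypothesis of~(i) at $(\rho,\tilde x,\tilde\omega_i)$; taking $\varepsilon'$ below the tolerance $\varepsilon$ that this hypothesis then supplies and such that $\kappa+\varepsilon'\le\alpha$, the hypothesis forces $\norm{\sum_i x_i^*}>\alpha$, contradicting $\norm{\sum_i x_i^*}<\kappa+\varepsilon'$. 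This completes~(i). The "if'' part of~(ii) follows from the same argument with $\Omega_i$ replaced by $\Omega_i-x_i$ throughout (the target now being \eqref{umi}); since the dual condition in~(ii) requires genuine Fr\'echet normals, one applies it to the renormalised vectors $n_i^*/\sum_j\norm{n_j^*}\in N_{\Omega_i}(\omega_i'+x_i)$, using that $\sum_j\norm{n_j^*}$ is close to $1$ and $\norm{\sum_j n_j^*}$ is small.

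The crux -- and the only real difficulty -- is the parameter bookkeeping in the last two paragraphs. One must coordinate the fixed small $\rho$, the radius $\delta'$ of admissible test points, the Ekeland data, and the tolerances $\varepsilon_0,\varepsilon'$ so that simultaneously: $\kappa+\varepsilon'\le\min\{\alpha,1\}$ (this both fits the normal-cone remainder into $\rho\B^*$ and drives the contradiction with \eqref{>al}); the Ekeland displacement $\norm{(x,\hat\omega)-(\tilde x,\tilde\omega)}_{\rho}$ and the fuzzy-sum-rule displacement are small enough to keep all base points inside $B_\rho(\bar x)$; and the quantifiers are respected, the hypothesis's tolerance $\varepsilon$ being available only after $(\tilde x,\tilde\omega_i)$ is fixed, so that $\varepsilon'$ is chosen last. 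One also has to note that the description of $\sd\varphi$ from Lemma~\ref{ll02} together with the representation of the duality mapping in Proposition~\ref{prJ} delivers the inner-product \emph{equality} $\langle x_i^*,x'-\omega_i'\rangle=\norm{x_i^*}\norm{x'-\omega_i'}$ and the "zero unless maximal'' alternative verbatim, both at least as strong as what \eqref{=1} requires, so that the hypothesis is indeed applicable. Granting this, the contradiction is immediate and the proof closes.
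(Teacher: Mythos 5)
Your argument is correct and, for part (i), rests on exactly the same machinery as the paper: the lift to $X^{m+1}$ equipped with the norm \eqref{norm}, the Ekeland variational principle (Lemma~\ref{l01}) applied to $f=\varphi+\delta_{\widehat\Omega}$, the fuzzy sum rule (Lemma~\ref{l02}), and the computations of $\partial\varphi$, $N_{\widehat\Omega}$ and the duality mapping $J$ on the max-normed product (Lemmas~\ref{ll02}--\ref{ll03}, Proposition~\ref{prJ}). The packaging differs: the paper factors the proof through a chain of intermediate ``slope'' constants, $\zeta[\bold{\Omega}](\bar x)\ge\hat\zeta[\bold{\Omega}](\bar x)\ge\hat\zeta^*_1[\bold{\Omega}](\bar x)\ge\hat\zeta^*_2[\bold{\Omega}](\bar x)$ (Propositions~\ref{T3.1.1}--\ref{T3.1.3}), each step being a separate statement of independent interest, whereas you run a single pointwise dichotomy (Ekeland point either lands in $\bigcap_i\Omega_i$, giving the error bound \eqref{Hlr}, or has positive value, in which case the fuzzy sum rule manufactures normals violating the hypothesis). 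The two are equivalent in substance; the paper's version buys reusable intermediate estimates, yours is shorter and makes the quantifier bookkeeping (which you correctly identify as the only delicate point) more visible. The more genuine divergence is in part (ii): the paper simply cites \cite[Theorem~4]{Kru09.1}, while you give a self-contained primal proof of the ``only if'' direction by testing \eqref{umi} against the perturbations $x_i=\omega_i+te_i-x$ and a sketch of the ``if'' direction by translation; your ``only if'' computation is sound (modulo shrinking $\delta$ so that $\|x_i\|\le\delta$, since a priori $\|\omega_i-\omega_1\|$ is only bounded by $2\delta$). One small imprecision in part (i): the fuzzy sum rule returns $\partial\varphi$ at a point $(x',\hat x)$ whose second component need not coincide with the point $\hat\omega'\in\widehat\Omega$ carrying the normal $\hat n^*$, so you obtain the exact duality-mapping equality $\langle x_i^*,x'-x_i\rangle=\|x_i^*\|\,\|x'-x_i\|$ only at $\hat x$ and must transfer it to $\hat\omega'$; this is precisely why the hypothesis of (i) carries the $\varepsilon$-slack in $\langle x_i^*,x'-\omega_i'\rangle\ge\|x_i^*\|(\|x'-\omega_i'\|-\varepsilon)$, and your claim of a verbatim equality at $\hat\omega'$ should be weakened accordingly --- the conclusion is unaffected.
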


The proof of Theorem~\ref{T3.1}~(i) consists of a series of propositions providing lower estimates for constant \eqref{errcon'} and, thus, sufficient conditions for subregularity of $\bold{\Omega}$ which can be of independent interest.
Observe that constant \eqref{errcon'} can be rewritten as
\begin{equation}\label{errcon}
\zeta[\bold{\Omega}](\bar{x})= \liminflarge_{\substack{x\to\bar{x},\, \omega_i\to\bar x\; (1\le i\le m)\\ \hat\omega=(\omega_1,\ldots,\omega_m)\\x\notin \bigcap_{i=1}^m\Omega_i}} \frac{f(x,\hat\omega)} {d\left(x,\bigcap_{i=1}^m\Omega_i\right)}
\end{equation}
with function $f:X^{m+1}\to{\R}_\infty :={\R}\cup\{+\infty\}$ defined as
\begin{equation}\label{f}
f(x,\hat x)=\max_{1\le i\le m} \norm{x-x_i} + \delta_{\widehat\Omega}(\hat x),\quad x\in X,\; \hat x:=(x_1,\ldots,x_m)\in X^m,
\end{equation}
where $\delta_{\widehat\Omega}$ is the indicator function of $\widehat\Omega$: $\delta_{\widehat\Omega}(\hat x)=0$ if $\hat x\in\widehat\Omega$ and $\delta_{\widehat\Omega}(\hat x)=+\infty$ otherwise.

\begin{prop}\label{T3.1.1}
Let $X$ be a Banach space and $\Omega_1$, \ldots, $\Omega_m$ be closed.
\begin{enumerate}
\item
$\hat\zeta[\bold{\Omega}](\bar x)\le \zeta[\bold{\Omega}](\bar{x})$, where
\begin{equation}\label{strslo}
\hat\zeta[\bold{\Omega}](\bar x):= \lim_{\rho\downarrow0} \inflarge_{\substack{\norm{x-\bar x}<\rho\\ \hat\omega=(\omega_1,\ldots,\omega_m)\in \widehat\Omega\\ 0<\max\limits_{1\le i\le m} \norm{x-\omega_i}<\rho}}
\zeta_{\rho}[\bold{\Omega}](x,\hat\omega)
\end{equation}
and, for $x\in X$ and $\hat\omega=(\omega_1,\ldots,\omega_m)\in \widehat\Omega$,
\begin{equation}\label{rhoslo}
\zeta_{\rho}[\bold{\Omega}](x,\hat\omega):= \limsup_{\substack{(u,\hat v)\to(x,\hat\omega)\\(u,\hat v)\neq (x,\hat\omega)\\ \hat v=(v_1,\ldots,v_m)\in \widehat\Omega}} \frac{\left(\max\limits_{1\le i\le m} \norm{x-\omega_i}-\max\limits_{1\le i\le m} \norm{u-v_i}\right)_+}{\norm{(u,\hat v)-(x,\hat\omega)}_{\rho}}.
\end{equation}
\item
If $\hat\zeta[\bold{\Omega}](\bar x)>0$, then $\bold{\Omega}$ is subregular at $\bar x$.
\end{enumerate}
\end{prop}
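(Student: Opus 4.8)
The second assertion follows immediately from the first one together with Proposition~\ref{theorem11}~(ii): once $\zeta[\bold{\Omega}](\bar x)\ge\hat\zeta[\bold{\Omega}](\bar x)$ is known, the assumption $\hat\zeta[\bold{\Omega}](\bar x)>0$ forces $\zeta[\bold{\Omega}](\bar x)>0$, i.e.\ $\bold{\Omega}$ is subregular at $\bar x$. Hence the real content is the inequality $\hat\zeta[\bold{\Omega}](\bar x)\le\zeta[\bold{\Omega}](\bar x)$ in (i), which I would obtain as a local error bound via the Ekeland variational principle (Lemma~\ref{l01}).

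The starting point is the observation that $\zeta_\rho[\bold{\Omega}](x,\hat\omega)$ from~\eqref{rhoslo} is exactly the strong slope, with respect to the norm~\eqref{norm}, of the continuous function $g(u,\hat v):=\max_{1\le i\le m}\norm{u-v_i}$ on the metric space $M:=X\times\widehat\Omega$, evaluated at $(x,\hat\omega)$; here $M$ is complete since $X$ is Banach and the $\Omega_i$ are closed, $g$ is nonnegative, $\inf_M g=0$ because $\bar x\in\bigcap_{i=1}^m\Omega_i$, and $g(y,(y,\ldots,y))=0$ implies $y\in\bigcap_{i=1}^m\Omega_i$. Now fix $\gamma'<\hat\zeta[\bold{\Omega}](\bar x)$. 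As $\rho$ decreases the admissible set of pairs $(x,\hat\omega)$ in~\eqref{strslo} shrinks and the denominator in~\eqref{rhoslo} decreases, so the inner infimum in~\eqref{strslo} is nondecreasing as $\rho\downarrow0$; consequently there is $\rho_0>0$ such that $\zeta_\rho[\bold{\Omega}](x',\hat\omega')>\gamma'$ for every $\rho\in\,]0,\rho_0[$ and every $x'\in X$, $\hat\omega'=(\omega'_1,\ldots,\omega'_m)\in\widehat\Omega$ with $\norm{x'-\bar x}<\rho$ and $0<\max_{1\le i\le m}\norm{x'-\omega'_i}<\rho$.

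The main step is the claim that, for a fixed $\rho\in\,]0,\rho_0[$, the inequality $\gamma'\,d\bigl(x,\bigcap_{i=1}^m\Omega_i\bigr)\le\max_{1\le i\le m}d(x,\Omega_i)$ holds for all $x\in B_{\rho/4}(\bar x)\setminus\bigcap_{i=1}^m\Omega_i$. I would prove it by contradiction: suppose $\eta:=\max_{1\le i\le m}d(x,\Omega_i)<\gamma' d$, where $d:=d\bigl(x,\bigcap_{i=1}^m\Omega_i\bigr)\in\,]0,\rho/4[$, choose $\hat\omega=(\omega_1,\ldots,\omega_m)\in\widehat\Omega$ with $\max_i\norm{x-\omega_i}<\eta+\tau$ for $\tau>0$ small enough that $\eta+\tau<\min\{\gamma' d,\rho\}$, and apply Lemma~\ref{l01} to $g$ on $M$ at $v=(x,\hat\omega)$ with $\varepsilon:=\eta+\tau$ and $\lambda:=d$. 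This gives $(x_1,\hat\omega_1)\in M$ with $\norm{(x_1,\hat\omega_1)-(x,\hat\omega)}_\rho<d$, with $g(x_1,\hat\omega_1)\le g(x,\hat\omega)$, and with $\zeta_\rho[\bold{\Omega}](x_1,\hat\omega_1)\le\varepsilon/\lambda<\gamma'$ (Ekeland's estimate~(c), rewritten as a bound on the strong slope of $g$). If $g(x_1,\hat\omega_1)=0$ then all components of $\hat\omega_1$ coincide with $x_1$, so $x_1\in\bigcap_{i=1}^m\Omega_i$, yet $\norm{x_1-x}\le\norm{(x_1,\hat\omega_1)-(x,\hat\omega)}_\rho<d$ contradicts the definition of $d$. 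If $g(x_1,\hat\omega_1)>0$, then $\norm{x_1-\bar x}\le\norm{x_1-x}+\norm{x-\bar x}<d+\rho/4<\rho$ and $0<g(x_1,\hat\omega_1)\le g(x,\hat\omega)<\rho$, so the choice of $\rho_0$ yields $\zeta_\rho[\bold{\Omega}](x_1,\hat\omega_1)>\gamma'$, contradicting the bound above. This proves the claim, whence $\zeta[\bold{\Omega}](\bar x)\ge\gamma'$ by passing to the liminf in the first expression in~\eqref{errcon'}; since $\gamma'<\hat\zeta[\bold{\Omega}](\bar x)$ was arbitrary, $\zeta[\bold{\Omega}](\bar x)\ge\hat\zeta[\bold{\Omega}](\bar x)$, completing (i).

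The only delicate issue is keeping track of the two radii so that the Ekeland point $(x_1,\hat\omega_1)$ still falls inside the region where the uniform slope estimate $\zeta_\rho[\bold{\Omega}]>\gamma'$ is available; this is exactly why $x$ is taken in the smaller ball $B_{\rho/4}(\bar x)$ and $\lambda$ is chosen equal to $d\le\norm{x-\bar x}$ rather than larger. It is also worth stressing that the $\rho$-weighted norm~\eqref{norm} is what makes the scheme go through: it lets the first coordinate of $M$ dominate, so that the Ekeland displacement bound in $\norm{\cdot}_\rho$ controls $\norm{x_1-x}$, and hence $d\bigl(x,\bigcap_{i=1}^m\Omega_i\bigr)$, without any loss in the constant.
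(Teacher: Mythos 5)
Your proof is correct and follows essentially the same route as the paper's: both apply the Ekeland variational principle to the function $(u,\hat v)\mapsto\max_{1\le i\le m}\norm{u-v_i}$ on $X\times\widehat\Omega$ with the $\rho$-weighted norm \eqref{norm}, so that Ekeland's conclusion (c) becomes exactly an upper bound on $\zeta_{\rho}[\bold{\Omega}]$ at the perturbed point. The only difference is organizational: you argue by contradiction from $\gamma'<\hat\zeta[\bold{\Omega}](\bar x)$ with the clean choice $\lambda=d\bigl(x,\bigcap_{i=1}^m\Omega_i\bigr)$, whereas the paper starts from $\alpha>\zeta[\bold{\Omega}](\bar{x})$ and takes $\lambda=\mu(1-\mu^{\rho/(2-\rho)})$ to obtain the bound $\hat\zeta[\bold{\Omega}](\bar x)\le\alpha/(1-\rho)$ directly; both versions of the bookkeeping are sound.
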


\begin{proof}
(i)
Let $\zeta[\bold{\Omega}](\bar{x})<\alpha<\infty$. Choose a $\rho\in ]0,1[$ and set
\begin{equation}\label{28}
\eta:= \min\left\{\frac{\rho}{2},\frac{\rho}{\alpha},\rho^{\frac{2}{\rho}}\right\}.
\end{equation}
By \eqref{errcon}, there are $x'\in B_{\eta}(\bar x)$ and $\hat\omega'=(\omega_1',\ldots,\omega_m')\in \widehat\Omega$ such that
\begin{equation}\label{29}
   0<f(x',\hat\omega')<\alpha d\left(x',\bigcap_{i=1}^m\Omega_i\right).
\end{equation}
Denote $\varepsilon:=f(x',\hat\omega')$ and $\mu:=d\left(x',\bigcap_{i=1}^m\Omega_i\right)$. Then $\mu\le \norm{x'-\bar x}\le \eta\le \frac{\rho}{2}<1$.
Observe that $f$ is lower semicontinuous.
Applying to $f$ Lemma~\ref{l01} with $\varepsilon$ as above and
\begin{equation}\label{30}
\lambda:=\mu(1-\mu^{\frac{\rho}{2-\rho}}),
\end{equation}
we find points $x\in X$ and $\hat\omega=(\omega_1,\ldots,\omega_m)\in X^{m}$ such that
\begin{equation}\label{31}
   \norm{(x,\hat\omega)-(x',\hat\omega')}_{\rho}<\lambda,\; f(x,\hat\omega)\le f(x',\hat\omega'),
\end{equation}
and
\begin{equation}\label{32}
f(u,\hat v)+\frac{\varepsilon}{\lambda}\norm{(u,\hat v)-(x,\hat\omega)}_{\rho} \ge f(x,\hat\omega),
\end{equation}
for all $(u,\hat v)\in X\times X^{m}$.
Thanks to \eqref{31}, \eqref{30}, \eqref{28}, and \eqref{29}, we have
$$
\norm{x-x'}<\lambda <\mu \le \norm{x'-\bar x},
$$
\begin{equation}\label{33}
   d\left(x,\bigcap_{i=1}^m\Omega_i\right)\ge d\left(x',\bigcap_{i=1}^m\Omega_i\right)- \norm{x-x'}\ge \mu-\lambda =\mu^{\frac{2}{2-\rho}},
\end{equation}
\begin{equation}\label{34}
\norm{x-\bar x}\le \norm{x-x'}+\norm{x'-\bar x}<2\norm{x'-\bar x}\le 2\eta\le\rho,
\end{equation}
\begin{equation}\label{35}
f(x,\hat\omega)\le f(x',\hat\omega')<\alpha\mu\le \alpha\eta\le\rho.
\end{equation}
It follows from \eqref{33}, \eqref{34}, and \eqref{35} that
$$
\norm{x-\bar x}<\rho,\; \hat\omega\in \widehat\Omega,\; 0<\max\limits_{1\le i\le m} \norm{x-\omega_i}<\rho.
$$
Observe that $\mu^{\frac{\rho}{2-\rho}}\le \eta^{\frac{\rho}{2-\rho}}< \eta^{\frac{\rho}{2}}\le \rho$, and consequently, by \eqref{29} and \eqref{30},
$$
\frac{\varepsilon}{\lambda} <\frac{\alpha\mu}{\lambda} =\frac{\alpha}{1-\mu^{\frac{\rho}{2-\rho}}} <\frac{\alpha}{1-\rho}.
$$
Thanks to \eqref{32} and \eqref{f}, we have
$$
\max\limits_{1\le i\le m} \norm{x-\omega_i}-\max\limits_{1\le i\le m} \norm{u-v_i}\le \frac{\alpha}{1-\rho}\norm{(u,\hat v)-(x,\hat\omega)}_{\rho}
$$
for all $u\in X$ and $\hat v=(v_1,\ldots,v_m)\in \widehat\Omega$.
It follows that $\zeta_{\rho}[\bold{\Omega}](x,\hat\omega)\le \dfrac{\alpha}{1-\rho}$ and consequently
$$
\inflarge_{\substack{\norm{x-\bar x}<\rho\\\hat\omega=(\omega_1,\ldots,\omega_m)\in \widehat\Omega\\ 0<\max\limits_{1\le i\le m} \norm{x-\omega_i}<\rho}}
\zeta_{\rho}[\bold{\Omega}](x,\hat\omega)\le \frac{\alpha}{1-\rho}.
$$
Taking limits in the last inequality as $\rho\downarrow 0$ and $\alpha\to \zeta[\bold{\Omega}](\bar{x})$ yields the claimed inequality.

(ii) follows from (i) and Proposition~\ref{theorem11}~(ii).
\qed\end{proof}

\begin{prop}\label{T3.1.2}
Let $X$ be an Asplund space and $\Omega_1$, \ldots, $\Omega_m$ be closed.
\begin{enumerate}
\item
$\hat\zeta^*_1[\bold{\Omega}](\bar x)\le \hat\zeta[\bold{\Omega}](\bar x)$, where $\hat\zeta[\bold{\Omega}](\bar x)$ is given by \eqref{strslo},
\begin{equation}\label{strsubslo'}
\hat\zeta^*_1[\bold{\Omega}](\bar x):= \lim_{\rho\downarrow 0} \inflarge_{\substack{\norm{x-\bar x}<\rho\\ \hat\omega=(\omega_1,\ldots,\omega_m)\in \widehat\Omega\\ 0<\max\limits_{1\le i\le m} \norm{x-\omega_i}<\rho}}
\zeta_{\rho,1}^*[\bold{\Omega}](x,\hat\omega)
\end{equation}
and, for $x\in X$ and $\hat\omega=(\omega_1,\ldots,\omega_m)\in \widehat\Omega$,
\begin{equation}\label{subrhoslo}
\zeta_{\rho,1}^*[\bold{\Omega}](x,\hat\omega):= \inflarge_{\substack{(x^*,\hat y^*)\in\partial f(x,\hat\omega)\\ \norm{\hat y^*}<\rho}}\norm{x^*}
\end{equation}
(with the convention that the infimum over the empty set equals $+\infty$).
\item
If $\hat\zeta^*_1[\bold{\Omega}](\bar x)>0$, then $\bold{\Omega}$ is subregular at $\bar x$.
\end{enumerate}
\end{prop}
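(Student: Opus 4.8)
\emph{Proof plan.}
Part~(ii) follows at once from part~(i) together with Proposition~\ref{T3.1.1}~(ii): if $\hat\zeta^*_1[\bold{\Omega}](\bar x)>0$, then by~(i) $\hat\zeta[\bold{\Omega}](\bar x)>0$, whence $\bold{\Omega}$ is subregular at $\bar x$. So the whole task is to establish the inequality $\hat\zeta^*_1[\bold{\Omega}](\bar x)\le\hat\zeta[\bold{\Omega}](\bar x)$ asserted in part~(i). We may assume $\hat\zeta[\bold{\Omega}](\bar x)<+\infty$ (otherwise there is nothing to prove; in fact the value is $\le1$ by Proposition~\ref{T3.1.1}~(i)), fix an arbitrary $\alpha>\hat\zeta[\bold{\Omega}](\bar x)$, and aim to prove $\hat\zeta^*_1[\bold{\Omega}](\bar x)\le\alpha$.

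By the definition \eqref{strslo} of $\hat\zeta[\bold{\Omega}](\bar x)$, for every sufficiently small $\rho>0$ there are $x\in X$ with $\norm{x-\bar x}<\rho$ and $\hat\omega=(\omega_1,\ldots,\omega_m)\in\widehat\Omega$ with $0<\max_{1\le i\le m}\norm{x-\omega_i}<\rho$ for which $\zeta_\rho[\bold{\Omega}](x,\hat\omega)<\alpha$. Unravelling \eqref{rhoslo}, this means that on some neighbourhood of $(x,\hat\omega)$ one has $\max_i\norm{x-\omega_i}-\max_i\norm{u-v_i}<\alpha\,\norm{(u,\hat v)-(x,\hat\omega)}_\rho$ whenever $\hat v=(v_1,\ldots,v_m)\in\widehat\Omega$ and $(u,\hat v)\ne(x,\hat\omega)$. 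In other words, with $f$ from \eqref{f} and the norm $\norm{\cdot}_\rho$ from \eqref{norm} on $X^{m+1}=X\times X^m$, the function $g:=f+\alpha\,\norm{\cdot-(x,\hat\omega)}_\rho$ attains a \emph{strict} local minimum at $(x,\hat\omega)$ (the case $\hat v\notin\widehat\Omega$ being covered by $f=+\infty$), with the finite value $g(x,\hat\omega)=\max_i\norm{x-\omega_i}$. Hence $0\in\partial g(x,\hat\omega)$.

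The next step is to apply the fuzzy sum rule (Lemma~\ref{l02}) in the Asplund space $(X^{m+1},\norm{\cdot}_\rho)$ to the decomposition $g=f_1+f_2$ with $f_1:=\alpha\,\norm{\cdot-(x,\hat\omega)}_\rho$ (Lipschitz) and $f_2:=f$ (lower semicontinuous, finite at $(x,\hat\omega)$ because each $\Omega_i$ is closed and $\hat\omega\in\widehat\Omega$). Given $\varepsilon>0$, this produces points $(y,\hat y)$ and $(x',\hat\omega')$ with $\hat\omega'=(\omega_1',\ldots,\omega_m')$, both within $\varepsilon$ of $(x,\hat\omega)$, with $\lvert f(x',\hat\omega')-f(x,\hat\omega)\rvert<\varepsilon$ (so $\hat\omega'\in\widehat\Omega$ and $\max_i\norm{x'-\omega_i'}$ lies within $\varepsilon$ of $\max_i\norm{x-\omega_i}\in\,]0,\rho[$), together with $(p^*,\hat p^*)\in\partial f(x',\hat\omega')$, $(q^*,\hat q^*)\in\partial\norm{\cdot-(x,\hat\omega)}_\rho(y,\hat y)$ and $(e^*,\hat e^*)$ with $\norm{(e^*,\hat e^*)}_\rho<\varepsilon$ satisfying $(p^*,\hat p^*)+\alpha(q^*,\hat q^*)+(e^*,\hat e^*)=0$. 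By Lemma~\ref{ll01}, $\norm{(q^*,\hat q^*)}_\rho\le1$; using the dual-norm formula \eqref{normd}, this means $\norm{q^*}+\rho^{-1}\norm{\hat q^*}\le1$, and likewise $\norm{e^*}+\rho^{-1}\norm{\hat e^*}<\varepsilon$. It follows that $\norm{p^*}\le\alpha+\varepsilon$ and $\norm{\hat p^*}\le(\alpha+\varepsilon)\rho$. Put $\rho':=(1+\alpha+\varepsilon)\rho$; then, for $\varepsilon$ sufficiently small, $(x',\hat\omega')$ lies in the index set of \eqref{strsubslo'} at level $\rho'$ and $\norm{\hat p^*}<\rho'$, so by \eqref{subrhoslo} $\zeta^*_{\rho',1}[\bold{\Omega}](x',\hat\omega')\le\norm{p^*}\le\alpha+\varepsilon$, whence the infimum in \eqref{strsubslo'} at level $\rho'$ is at most $\alpha+\varepsilon$. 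Since $\rho'\downarrow0$ as $\rho,\varepsilon\downarrow0$ and $\hat\zeta^*_1[\bold{\Omega}](\bar x)$ is the limit over shrinking radii, letting $\rho\downarrow0$, then $\varepsilon\downarrow0$, then $\alpha\downarrow\hat\zeta[\bold{\Omega}](\bar x)$ gives $\hat\zeta^*_1[\bold{\Omega}](\bar x)\le\hat\zeta[\bold{\Omega}](\bar x)$, which proves~(i).

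The delicate part is the final bookkeeping imposed by the $\rho$-dependent norm $\norm{\cdot}_\rho$: one must split the two components of each dual element correctly through \eqref{normd}, verify that the perturbed point $(x',\hat\omega')$ produced by the sum rule still sits inside the (slightly enlarged) sampling region of \eqref{strsubslo'}, and keep the estimate on the $\hat p^*$-component \emph{strictly} below the threshold $\rho'$ — which is exactly what forces the harmless enlargement from $\rho$ to $\rho'=(1+\alpha+\varepsilon)\rho$ and the corresponding passage to the limit through the family of radii. A second point worth stating carefully is that $\zeta_\rho[\bold{\Omega}](x,\hat\omega)<\alpha$ already encodes exact (strict) local minimality of $g$, so no appeal to Ekeland's variational principle is needed at this stage and the fuzzy sum rule can be applied directly.
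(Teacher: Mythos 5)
Your proposal is correct and follows essentially the same route as the paper's proof: extract a local minimizer of $f+\alpha\norm{\cdot-(x,\hat\omega)}_{\rho}$ from $\zeta_\rho[\bold{\Omega}](x,\hat\omega)<\alpha$, apply the Fermat rule and the fuzzy sum rule in $(X^{m+1},\norm{\cdot}_\rho)$, and bound the norm term's subdifferential by the dual unit ball via \eqref{normd}. The only (harmless) differences are bookkeeping choices: the paper pre-scales the radius to $\rho'=\min\{1,\alpha^{-1}\}\rho$ and inserts an intermediate $\beta<\alpha$ so everything lands strictly inside level $\rho$, whereas you enlarge afterwards to $\rho'=(1+\alpha+\varepsilon)\rho$ and pass to the limit in $\varepsilon$.
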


\begin{proof}
Let $\hat\zeta[\bold{\Omega}](\bar x)< \alpha<\infty$.
Choose a $\beta\in ]\hat\zeta[\bold{\Omega}](\bar x),\alpha[$ and an arbitrary ${\rho>0}$.
Set ${\rho'=\min\{1,\alpha^{-1}\}\rho}$.
By \eqref{strslo} and \eqref{rhoslo}, one can find points $x\in X$ and ${\hat\omega=(\omega_1,\ldots,\omega_m)\in \widehat\Omega}$ such that $\norm{x-\bar x}<\rho'$, $0<\max_{1\le i\le m} \norm{\omega_i-x}<\rho'$, and
$$
\max\limits_{1\le i\le m} \norm{x-\omega_i}-\max\limits_{1\le i\le m} \norm{u-v_i}\le \beta\norm{(u,\hat v)-(x,\hat\omega)}_{\rho'}
$$
for all $(u,\hat v)$ with $\hat v=(v_1,\ldots,v_m)\in \widehat\Omega$ near $(x,\hat\omega)$.
In other words, $(x,\hat\omega)$ is a local minimizer of the function
$$
(u,\hat v)\mapsto \max\limits_{1\le i\le m} \norm{u-v_i}+\beta\norm{(u,\hat v)-(x,\hat\omega)}_{\rho'}
$$
subject to $\hat v=(v_1,\ldots,v_m)\in \widehat\Omega$.
By definition \eqref{f}, this means that $(x,\hat\omega)$ minimizes locally the function
$$
(u,\hat v)\mapsto f(u,\hat v)+\beta\norm{(u,\hat v)-(x,\hat\omega)}_{\rho'},
$$
and consequently its Fr\'echet subdifferential at $(x,\hat\omega)$ contains zero.
Take an
$$
\varepsilon\in\Bigl]0,\min\{\rho-\norm{x-\bar x},\rho-\max_{1\le i\le m} \norm{x-\omega_i},\alpha-\beta\}\Bigr[.
$$
Applying Lemma~\ref{l02} and Lemma~\ref{ll01}~(ii), we can find points $x'\in X$, ${\hat\omega'=(\omega_1',\ldots,\omega_m')\in \widehat\Omega}$, and $(x^*,\hat y^*)\in \partial f(x',\hat\omega')$ such that $$\norm{x'-x}<\varepsilon,\quad 0<\max_{1\le i\le m} \norm{x'-\omega_i'}\le \max_{1\le i\le m} \norm{x-\omega_i}+\varepsilon,$$
$$\mbox{and}\quad\norm{(x^*,\hat y^*)}_{\rho'} =\|x^*\|+\|\hat y^*\|/\rho' <\beta+\varepsilon.$$
It follows that
$$\norm{x'-\bar x}<\rho,\;\; 0<\max_{1\le i\le m} \norm{x'-\omega_i'}<\rho,\;\; \norm{x^*}<\alpha,\;\mbox{ and }\; \norm{\hat y^*}<\rho'\alpha\le \rho.$$
Hence, $\zeta_{\rho,1}^*[\bold{\Omega}](x',\hat\omega')<\alpha$, and consequently $\hat\zeta^*_1[\bold{\Omega}](\bar x)<\alpha$.
By letting $\alpha\to \hat\zeta[\bold{\Omega}](\bar x)$, we obtain the claimed inequality.

(ii) follows from (i) and Proposition~\ref{T3.1.1}~(ii).
\qed\end{proof}

\begin{prop}\label{T3.1.3}
Let $X$ be an Asplund space and $\Omega_1$, \ldots, $\Omega_m$ be closed.
\begin{enumerate}
\item
$\hat\zeta^*_2[\bold{\Omega}](\bar x)\le \hat\zeta^*_1[\bold{\Omega}](\bar x)$, where $\hat\zeta^*_1[\bold{\Omega}](\bar x)$ is given by \eqref{strsubslo'},
\begin{equation}\label{c2}
\hat\zeta^{*}_2[\bold{\Omega}](\bar x):= \lim_{\rho\downarrow 0} \inf_{\substack{\norm{x-\bar x}<\rho\\ \hat\omega=(\omega_1,\ldots,\omega_m)\in \widehat\Omega\\ 0<\max\limits_{1\le i\le m} \norm{x-\omega_i}<\rho}}
\lim_{\varepsilon\downarrow 0} \inf_{\substack{\norm{x'-x}<\varepsilon\\ \hat\omega'\in \widehat\Omega\\ \norm{\hat\omega'-\hat\omega}<\varepsilon}}
\zeta_{\rho,\varepsilon,2}^{*}[\bold{\Omega}] (x',\hat\omega')
\end{equation}
and, for $x\in X$ and $\hat\omega=(\omega_1,\ldots,\omega_m)\in \widehat\Omega$ with $(x-\omega_1,\ldots,x-\omega_m)\ne0$,
\begin{align}
\zeta_{\rho,\varepsilon,2}^{*}[\bold{\Omega}](x,\hat\omega):= \inf\Biggl\{
\left\|\sum\limits_{i=1}^m{x}_i^*\right\|:\,
&
x_{i}^*\in N_{\Omega_i}(\omega_i) +\rho\B^*\quad (i=1,\ldots,m),
\notag\\&
x_i^*=0
\quad\mbox{if}\quad
\norm{x-\omega_i}<\max_{1\le j\le m} \norm{x-\omega_j},
\notag\\&
\langle x_i^*,x-\omega_i\rangle
\ge\|x_i^*\|(\|x-\omega_i\|-\varepsilon),
\notag\\&
\sum_{i=1}^m\|x_i^*\|=1
\Biggr\}.\label{c2r}
\end{align}
\item
If $\hat\zeta^*_2[\bold{\Omega}](\bar x)>0$, then $\bold{\Omega}$ is subregular at $\bar x$.
\end{enumerate}
\end{prop}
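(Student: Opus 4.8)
The plan is to follow the pattern of the proofs of Propositions~\ref{T3.1.1} and \ref{T3.1.2}: for each $\alpha>\hat\zeta^{*}_1[\bold{\Omega}](\bar x)$ I would construct families $(x_i^*)$ admissible in \eqref{c2r} at suitable points and with $\|\sum_i x_i^*\|<\alpha$, and then pass to the limit. First note that the limits in \eqref{c2} and \eqref{strsubslo'} are suprema: when $\rho\downarrow0$ the sets $N_{\Omega_i}(\omega_i)+\rho\B^*$ in \eqref{c2r} and the localization regions shrink, and when $\varepsilon\downarrow0$ the constraint $\langle x_i^*,x-\omega_i\rangle\ge\|x_i^*\|(\|x-\omega_i\|-\varepsilon)$ tightens, so the relevant quantities are nondecreasing. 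Hence it suffices to show: given $\alpha_0\in\,]\hat\zeta^{*}_1[\bold{\Omega}](\bar x),\alpha[$ and $\rho>0$, there are $x\in X$ and $\hat\omega=(\omega_1,\dots,\omega_m)\in\widehat\Omega$ with $\|x-\bar x\|<\rho$, $0<\max_i\|x-\omega_i\|<\rho$, such that for every $\varepsilon>0$ one can find $x'\in B_{\varepsilon}(x)$ and $\hat\omega'=(\omega_1',\dots,\omega_m')\in\widehat\Omega$ with $\|\hat\omega'-\hat\omega\|<\varepsilon$, and $(x_i^*)$ admissible in \eqref{c2r} at $(x',\hat\omega')$ with $\|\sum_i x_i^*\|<\alpha$. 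Because $\hat\zeta^{*}_1[\bold{\Omega}](\bar x)<\alpha_0$, the definitions \eqref{strsubslo'}--\eqref{subrhoslo}, used with $\rho/2$ in place of $\rho$, provide such a pair $(x,\hat\omega)$ and an element $(x^*,\hat y^*)\in\partial f(x,\hat\omega)$ with $\|\hat y^*\|<\rho/2$ and $\|x^*\|<\alpha_0$, which I fix.

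Next I would fix $\varepsilon>0$ and a small $\varepsilon'>0$ to be specified, and apply the fuzzy sum rule (Lemma~\ref{l02}) on the Asplund space $X^{m+1}$ to $f=\varphi+\delta_{\widehat\Omega}$, where $\varphi(u,\hat u)=\|(u-u_1,\dots,u-u_m)\|$ is Lipschitz and $(x,\hat x)\mapsto\delta_{\widehat\Omega}(\hat x)$ is lower semicontinuous ($\Omega_i$ being closed). This produces points $(u_1,\hat a)$ and $(u_2,\hat\omega')$ within $\varepsilon'$ of $(x,\hat\omega)$, with $\hat\omega'=(\omega_1',\dots,\omega_m')\in\widehat\Omega$, and subgradients $(w^*,\hat w^*)\in\partial\varphi(u_1,\hat a)$ and $(0,\hat n^*)$ in the Fr\'echet subdifferential of $(x,\hat x)\mapsto\delta_{\widehat\Omega}(\hat x)$ at $(u_2,\hat\omega')$, the latter being $\{0\}\times N_{\widehat\Omega}(\hat\omega')$ with $N_{\widehat\Omega}(\hat\omega')=N_{\Omega_1}(\omega_1')\times\cdots\times N_{\Omega_m}(\omega_m')$ by Lemma~\ref{ll03}, such that $x^*=w^*+e_0^*$ and $y_i^*=w_i^*+n_i^*+e_i^*$ with $n_i^*\in N_{\Omega_i}(\omega_i')$ and $\|e_0^*\|+\sum_i\|e_i^*\|<\varepsilon'$. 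For $\varepsilon'$ small the vector $\hat b:=(u_1-a_1,\dots,u_1-a_m)$ is nonzero (since $\max_i\|x-\omega_i\|>0$), so Lemma~\ref{ll02} applies and gives $w^*=-(w_1^*+\dots+w_m^*)$ and $-\hat w^*=(-w_1^*,\dots,-w_m^*)\in J(\hat b)$. I set $x_i^*:=-w_i^*$ and take the inner point to be $(x',\hat\omega'):=(u_1,\hat\omega')$.

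Then I would verify admissibility of $(x_i^*)$ in \eqref{c2r} at $(x',\hat\omega')$. By Proposition~\ref{prJ} applied to $-\hat w^*\in J(\hat b)$: $\sum_i\|x_i^*\|=1$, and for each $i$ either $x_i^*=0$ or $\|u_1-a_i\|=\max_j\|u_1-a_j\|$ with $x_i^*\in\|x_i^*\|J(u_1-a_i)$. Moreover $\sum_i x_i^*=-\sum_i w_i^*=w^*=x^*-e_0^*$, whence $\|\sum_i x_i^*\|<\alpha_0+\varepsilon'$. From $x_i^*=n_i^*+(e_i^*-y_i^*)$ and $\|e_i^*-y_i^*\|<\varepsilon'+\rho/2$ one gets $x_i^*\in N_{\Omega_i}(\omega_i')+\rho\B^*$ as soon as $\varepsilon'<\rho/2$. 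For $i$ with $x_i^*\neq0$, combining $\langle x_i^*,u_1-a_i\rangle=\|x_i^*\|\,\|u_1-a_i\|$ with $\|a_i-\omega_i'\|\le\|a_i-\omega_i\|+\|\omega_i-\omega_i'\|<2\varepsilon'$ yields $\langle x_i^*,x'-\omega_i'\rangle\ge\|x_i^*\|(\|x'-\omega_i'\|-4\varepsilon')$, which is the inequality of \eqref{c2r} provided $4\varepsilon'\le\varepsilon$; and $(x'-\omega_1',\dots,x'-\omega_m')\neq0$ for $\varepsilon'$ small. Choosing $\varepsilon'<\min\{\varepsilon/4,\rho/2,\alpha-\alpha_0\}$ and small enough for the two nonvanishing conditions makes all requirements hold, with $\|\sum_i x_i^*\|<\alpha$. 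Hence $\zeta^{*}_{\rho,\varepsilon,2}[\bold{\Omega}](x',\hat\omega')<\alpha$; letting $\varepsilon\downarrow0$, then taking the infimum over $(x,\hat\omega)$, then $\rho\downarrow0$ gives $\hat\zeta^{*}_2[\bold{\Omega}](\bar x)\le\alpha$, and, $\alpha>\hat\zeta^{*}_1[\bold{\Omega}](\bar x)$ being arbitrary, $\hat\zeta^{*}_2[\bold{\Omega}](\bar x)\le\hat\zeta^{*}_1[\bold{\Omega}](\bar x)$, which is (i). Part (ii) then follows from (i) together with Propositions~\ref{T3.1.1}(ii), \ref{T3.1.2}(ii), since $\hat\zeta^{*}_2[\bold{\Omega}](\bar x)>0$ forces $\hat\zeta^{*}_1[\bold{\Omega}](\bar x)>0$ and hence subregularity of $\bold{\Omega}$ at $\bar x$.

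The hard part will be the vanishing condition in \eqref{c2r} at the inner point, namely $x_i^*=0$ whenever $\|x'-\omega_i'\|<\max_j\|x'-\omega_j'\|$. The construction above only guarantees that the nonzero $x_i^*$ correspond to indices $i$ realizing $\max_j\|u_1-a_j\|$, while the $\varphi$-subgradient coordinates $a_i$ and the indicator-subgradient coordinates $\omega_i'$ coincide only up to $2\varepsilon'$; consequently the set of indices realizing $\max_j\|u_1-a_j\|$ and the set realizing $\max_j\|x'-\omega_j'\|=\max_j\|u_1-\omega_j'\|$ may differ through near-ties, so the constructed family need not be strictly admissible. I expect the resolution to require a more careful choice of the inner point $(x',\hat\omega')$, exploiting that the values $\|x-\omega_i\|$ and their maximum are fixed at the base point $(x,\hat\omega)$ so that the active index set is stable along the localization, thereby forcing the support of $(x_i^*)$ into the active set at $(x',\hat\omega')$; this index-matching is the genuinely delicate step, the remaining estimates being routine bookkeeping.
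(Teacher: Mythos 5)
Your proposal follows the paper's proof of part (i) essentially line by line: pick $(x,\hat\omega)$ and $(u^*,\hat v^*)\in\partial f(x,\hat\omega)$ with $\|\hat v^*\|<\rho$ realizing \eqref{strsubslo'}--\eqref{subrhoslo}, split $f=\varphi+\delta_{\widehat\Omega}$, apply the fuzzy sum rule (Lemma~\ref{l02}) together with Lemmas~\ref{ll02}, \ref{ll03} and Proposition~\ref{prJ}, and then check the normalization, the inclusion $x_i^*\in N_{\Omega_i}(\omega_i')+\rho\B^*$, the inner-product inequality with the $\varepsilon$-tolerance (your chain $\langle x_i^*,x'-\omega_i'\rangle\ge\|x_i^*\|(\|x'-\omega_i'\|-2\|x_i-\omega_i'\|)$ is exactly the paper's), and the bound $\bigl\|\sum_i x_i^*\bigr\|<\alpha$. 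Part (ii) is also handled as in the paper, via Proposition~\ref{T3.1.2}~(ii).

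The one step you leave open --- forcing $x_i^*=0$ whenever $\norm{x'-\omega_i'}<\max_j\norm{x'-\omega_j'}$, given that Proposition~\ref{prJ} only ties the support of $(x_1^*,\ldots,x_m^*)$ to the argmax of the \emph{auxiliary} distances $\norm{x'-x_i}$ --- is precisely the step the paper does not elaborate: it asserts that ``taking a smaller $\varepsilon$ if necessary, one can ensure that'' $\norm{x'-x_i}<\max_j\norm{x'-x_j}$ if and only if $\norm{x'-\omega_i'}<\max_j\norm{x'-\omega_j'}$, and then reads off the support condition exactly as you would. Shrinking $\varepsilon$ does settle the indices that are strictly non-maximal at the base point (for those, both perturbed distances stay strictly below both maxima once $\varepsilon$ is smaller than the gaps $\max_j\norm{x-\omega_j}-\norm{x-\omega_i}$), but for indices in the argmax set at $(x,\hat\omega)$ the near-ties among the $\norm{x'-x_i}$ and among the $\norm{x'-\omega_i'}$ can in principle break differently for every $\varepsilon$, and the paper supplies no further argument there; in particular it does not perform the ``more careful choice of the inner point'' you anticipate. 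So your diagnosis of where the genuine difficulty sits is accurate, and your write-up is missing exactly (and only) what the paper's proof disposes of by assertion.
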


\begin{proof}
(i)
Let $\rho>0$, $x\in X$, $\hat\omega:= (\omega_1,\ldots,\omega_m)\in\widehat\Omega$ with $\|x-\bar x\|<\rho$, ${0<\max_{1\le i\le m} \norm{x-\omega_i}<\rho}$, $(u^*,\hat v^*) \in\partial f(x,\hat\omega)$, where $f$ is given by \eqref{f}, and $\|\hat v^*\|<\rho$.
Denote $\hat v:=(x-\omega_1,\ldots,x-\omega_m)$.
Then $0<\|\hat v\|<\rho$.
Observe that function $f$ is the sum of two functions on $X^{m+1}$:
$$(x,\hat x)\mapsto \varphi(x,\hat x):= \|(x-x_1,\ldots,x-x_m)\|\quad\mbox{and}\quad(x,\hat x) \mapsto\delta_{\widehat\Omega}(\hat x),$$
where $\hat x:=(x_1,\ldots,x_m)$ and $\delta_{\widehat\Omega}$ is the indicator function of $\widehat\Omega$.
The first function is Lipschitz continuous while the second one is lower semicontinuous.
One can apply Lemma~\ref{l02}.
For any $\varepsilon>0$, there exist points $x'\in X$, $\hat x:=(x_1,\ldots,x_m)\in X^m$, $\hat\omega':=(\omega_1',\ldots,\omega_m') \in\widehat\Omega$, $\left(x^*,\hat y^*\right)\in\partial \varphi(x',\hat x)$, and $\hat\omega^*\in N_{\widehat\Omega}(\hat\omega')$ such that
\begin{gather}\notag
\|x'-x\|<\varepsilon,\quad \|\hat x-\hat\omega\|<\frac{\varepsilon}{4},\quad \|\hat\omega'-\hat\omega\|<\frac{\varepsilon}{4},
\\\label{est}
\|(u^*,\hat v^*)-(x^*,\hat y^*)-(0,\hat\omega^*)\|<\varepsilon.
\end{gather}
Taking a smaller $\varepsilon$ if necessary, one can ensure that $\hat v':=(x'-\omega_1',\ldots,x'-\omega_m')\ne0$, $\hat v'':=(x'-x_1,\ldots,x'-x_m)\ne0$, $\|\hat v^*\|+\varepsilon<\rho$
and,
for any $i=1,\ldots,m$, ${\norm{x'-x_i}<\max_{1\le j\le m} \norm{x'-x_j}}$ if and only if ${\norm{x'-\omega_i'}<\max_{1\le j\le m} \norm{x'-\omega_j'}}$.
By Lemma~\ref{ll02},
$$\hat x^*:=-\hat y^*\in J(\hat v'')
\quad\mbox{and}\quad
x^*=x_{1}^*+\ldots+x_{m}^*,$$
where
$\hat x^*=(x_1^*,\ldots,x_m^*)$.
By Proposition~\ref{prJ},
\begin{gather*}
\sum_{i=1}^m\|x_i^*\|=1,
\\
x_i^*=0
\quad\mbox{if}\quad
\norm{x'-\omega_i'}<\max_{1\le j\le m} \norm{x'-\omega_j'},
\end{gather*}
\begin{align*}
\langle x_i^*,x'-\omega_i'\rangle
&\ge\langle x_i^*,x'-x_i\rangle-\|x_i^*\|\,\|x_i-\omega_i'\| =\|x_i^*\|(\|x'-x_i\|-\|x_i-\omega_i'\|)
\\&
\ge\|x_i^*\|(\|x'-\omega_i'\|-2\|x_i-\omega_i'\|)
\ge\|x_i^*\|(\|x'-\omega_i'\|-\varepsilon) \quad(i=1,\ldots,m).
\end{align*}
Inequality \eqref{est} yields the estimates:
$\|u^*\|>\left\|{x}^*\right\|-\varepsilon$, $\left\|\hat x^*-\hat\omega^*\right\|<\|\hat v^*\|+\varepsilon<\rho$,
and consequently
$$
\|u^*\|> \left\|\sum\limits_{i=1}^m{x}_i^*\right\|-\varepsilon, \quad \hat x^*\in N_{\widehat\Omega}(\hat\omega') +\rho\B_m^*.
$$
It follows from Lemma~\ref{ll03} and definitions \eqref{subrhoslo} and \eqref{c2r} that
$$\zeta_{\rho,1}^*[\bold{\Omega}](x,\hat\omega)\ge \zeta_{\rho,\varepsilon,2}^*[\bold{\Omega}] (x',\hat\omega')-\varepsilon.$$
The claimed inequality is a consequence of the last one and definitions \eqref{strsubslo'} and \eqref{c2}.

(ii) follows from (i) and Proposition~\ref{T3.1.2}~(ii).
\qed\end{proof}

\begin{proof}\emph{of Theorem~\ref{T3.1}}
(i) follows from Proposition~\ref{T3.1.3}~(ii) and definitions \eqref{c2} and \eqref{c2r}.

(ii) is a consequence of \cite[Theorem~4]{Kru09.1}.
\qed\end{proof}

\begin{rem}
One of the main tools in the proof of Theorem~\ref{T3.1} is the fuzzy sum rule (Lemma~\ref{l02}) for Fr\'echet subdifferentials in Asplund spaces.
The statements can be extended to general Banach spaces.
For that, one has to replace Fr\'echet subdifferentials (and normal cones) with some other kind of subdifferentials satisfying a certain set of natural properties including the sum rule (not necessarily fuzzy) -- cf. \cite[p. 345]{KruLop12.1}.

If the sets $\Omega_1$, \ldots $\Omega_m$ are convex or the norm of $X$ is Fr\'echet differentiable away from $0$, then the fuzzy sum rule can be replaced in the proof by either the convex sum rule (Moreau--Rockafellar formula) or the simple (exact) differentiable rule (see, e.g., \cite[Corollary~1.12.2]{Kru03.1}), respectively, to produce dual sufficient conditions for regularity of collections of sets in general Banach spaces in terms of either normals in the sense of convex analysis or Fr\'echet normals.
\end{rem}
\begin{rem}
Since uniform regularity is a stronger property than subregularity (Remark~\ref{rem1}), the criterion in part (ii) of Theorem~\ref{T3.1} is also sufficient for the subregularity of the collection of sets in part (i).
\end{rem}

The next example illustrates application of Theorem~\ref{T3.1}~(i) for detecting subregularity of collections of sets.

\begin{examp}
Consider the collection $\{\Omega,\Omega\}$ of two copies of the set
${\Omega:=\R\times\{0\}}$
in the real plane ${\R}^2$ with the Euclidean norm (cf. Example~\ref{exa1})
and the point ${\bar x = (0,0) \in \Omega}$.

Obviously $N_{\Omega}(\omega)=\{0\}\times\R$ for any $\omega\in\Omega$.
If $x_1^*:=(a_1,b_1)\in N_{\Omega}(\omega'_1)+\rho\B^*$ and $x_2^*:=(a_2,b_2)\in N_{\Omega}(\omega'_2)+\rho\B^*$ for some $\omega'_1,\omega'_2\in\Omega$, then $|a_1|\le\rho$ and $|a_2|\le\rho$.

Take any positive numbers $\alpha$ and $\delta$ such that $\alpha^2+2\delta^2<1$ and any $\rho\in]0,\delta[$.
Let $\omega_1,\omega_2\in\Omega$, $x\in\R^2$, $\hat v:=(\omega_1-x,\omega_2-x)\in\R^4\setminus\{0\}$.
Because of the definition of $\Omega$, $\hat v$ has the following representation: $\hat v=(v_1,v,v_3,v)$.

If $v=0$, then $\xi:=v_1^2+v_3^2>0$.
Choose an $\varepsilon>0$ such that $$(\max\{|v_1|-\varepsilon,0\})^2 +(\max\{|v_3|-\varepsilon,0\})^2>\xi/2
\quad\mbox{and}\quad 4\varepsilon^2/\xi<\alpha^2.$$
There are no pairs $x_{1}^*$, $x_{2}^*$ satisfying the conditions of Theorem~\ref{T3.1}~(i).
Indeed, if ${\hat v':=(v_1',v_2',v_3',v_4')\in B_\varepsilon(\hat v)}$, then $|v_2'|\le\varepsilon$, $|v_4'|\le\varepsilon$, and $\|\hat v'\|^2\ge|v_1'|^2+|v_3'|^2>\xi/2$.
If $(x_{1}^*,x_{2}^*)\in J(\hat v')$, then  ${(x_{1}^*,x_{2}^*)=\hat v'/\|\hat v'\|}$.
Hence, $b_1^2+b_2^2\le2\varepsilon^2/\|\hat v'\|^2<4\varepsilon^2/\xi<\alpha^2$ and consequently $\|(x_{1}^*,x_{2}^*)\|<\alpha^2+2\delta^2<1$; a contradiction.

If $v\ne0$, then we choose an $\varepsilon\in(0,|v|)$.
If $\hat v'\in B_\varepsilon(\hat v)$ and $(x_{1}^*,x_{2}^*)\in J(\hat v')$, then $b_1$ and $b_2$ have the same sign as $v$ and $b_1^2+b_2^2\ge1-2\delta^2$.
Hence,
$$
\|x_{1}^*+x_{2}^*\|^2=(a_1+a_2)^2+(b_1+b_2)^2\ge(b_1+b_2)^2\ge b_1^2+b_2^2>\alpha^2.
$$
By Theorem~\ref{T3.1}~(i), the collection $\{\Omega,\Omega\}$ is subregular at $\bar x$.
\end{examp}

\section{Regularity of Set-Valued Mappings} \label{S5}

In this section, we present relationships between regularity properties of collections of sets and the corresponding properties of set-valued mappings, which have been intensively investigated; cf., e.g., \cite{RocWet98,Mor06.1,Iof00_,DonLewRoc03,DonRoc04,DonRoc09,Kru09.1,Pen89}.

Consider a set-valued mapping $F:X\rightrightarrows Y$ between metric spaces and a point $(\bar x,\bar y)\in \gr F:=\{(x,y)\in X\times Y:\; y\in F(x)\}$.
\begin{dfn}\label{metricR}
\begin{enumerate}
\item
$F$ is \emph{metrically semiregular} at $(\bar{x},\bar{y})$ iff there exist positive numbers $\gamma$ and $\delta$ such that
\begin{equation}\label{mhr}
    \gamma d\left(\bar x,F^{-1}(y)\right) \le d(y,\bar y),\; \forall y \in B_{\delta}(\bar{y}).
\end{equation}
The exact upper bound of all numbers $\gamma$ such that (\ref{mhr}) is satisfied will be denoted by $\theta[F](\bar{x},\bar y)$.
\item
$F$ is \emph{metrically subregular} at $(\bar{x},\bar{y})$ iff there exist positive numbers $\gamma$ and $\delta$ such that
\begin{equation}\label{msr}
    \gamma d\left(x,F^{-1}(\bar y)\right) \le d(\bar y,F(x)),\; \forall x \in B_{\delta}(\bar{x}).
\end{equation}
The exact upper bound of all numbers $\gamma$ such that (\ref{msr}) is satisfied will be denoted by $\zeta[F](\bar{x},\bar y)$.
\item
$F$ is \emph{metrically regular} at $(\bar{x},\bar{y})$ iff there exist positive numbers $\gamma$ and $\delta$ such that
\begin{equation}\label{mr}
    \gamma d\left(x,F^{-1}(y)\right) \le d\left(y,F(x)\right) ,\; \forall (x,y) \in B_{\delta}(\bar{x},\bar{y}).
\end{equation}
The exact upper bound of all numbers $\gamma$ such that (\ref{mr}) is satisfied will be denoted by $\hat{\theta}[F](\bar{x},\bar y)$.
\end{enumerate}
\end{dfn}
\begin{rem}
Property (ii) and especially property (iii) in Definition \ref{metricR} are very well known and widely used in variational analysis; see, e.g.,
\cite{RocWet98,Mor06.1,Iof00_,DonRoc09,Kru09.1,Pen89,DonLewRoc03,DonRoc04,ZheNg07,ZheNg10,ZheNg12}. Property (i)  was introduced in \cite{Kru09.1}.
In \cite{ArtMor11,ApeDurStr13}, it is referred to as \emph{metric hemiregularity}.
\end{rem}

For a collection of sets $\bold{\Omega}:=\{\Omega_1,\ldots,\Omega_m\}$ in a normed linear space $X$, one can consider set-valued mapping $F:X \rightrightarrows X^m$ defined by (cf. \cite[Proposition~5]{Iof00_}, \cite[Theorem 3]{Kru05.1}, \cite[Proposition 8]{Kru06.1}, \cite[p. 491]{LewLukMal09}, \cite[Proposition 33]{HesLuk})
$$
F(x):= (\Omega_1 -x)\times\ldots\times (\Omega_m -x),\; \forall x \in X.
$$
It is easy to check that, for $x \in X$ and $u=(u_1,\ldots,u_m) \in X^m$, it holds
$$
x \in \bigcap_{i=1}^m\Omega_i \iff 0 \in F(x),\quad F^{-1}(u) = \bigcap_{i=1}^m (\Omega_i-u_i).
$$

The next proposition is a consequence of Theorem~\ref{T2.1}.

\begin{prop}\label{theorem13}
Consider $\bold{\Omega}$ and $F$ as above and a point $\bar x\in \bigcap_{i=1}^m\Omega_i$.
\begin{enumerate}
\item
$\bold{\Omega}$ is semiregular at $\bar x$ if and only if $F$ is metrically semiregular at $(\bar x,0)$.
Moreover, $\theta[\bold{\Omega}](\bar{x})=\theta[F](\bar{x},0)$.
\item
$\bold{\Omega}$ is subregular at $\bar x$ if and only if $F$ is metrically subregular at $(\bar x,0)$.
Moreover, $\zeta[\bold{\Omega}](\bar{x})=\zeta[F](\bar{x},0)$.
\item
$\bold{\Omega}$ is uniformly regular at $\bar x$ if and only if $F$ is metrically regular at $(\bar x,0)$.
Moreover, $\hat{\theta}[\bold{\Omega}](\bar{x})=\hat{\theta}[F](\bar{x},0)$.
\end{enumerate}
\end{prop}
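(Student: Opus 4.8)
The plan is to deduce each of the three equivalences from the matching item of Theorem~\ref{T2.1} by rewriting the defining inequalities of Definition~\ref{metricR}, specialized to the mapping $F$ and the point $(\bar x,0)$, purely in terms of the sets $\Omega_1,\ldots,\Omega_m$. The whole argument rests on two elementary facts about $F$, together with the convention that products of normed spaces carry the maximum norm.

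First I would record the distance formulas. The identity $F^{-1}(u)=\bigcap_{i=1}^m(\Omega_i-u_i)$ for $u=(u_1,\ldots,u_m)\in X^m$ is already noted above. For the distance to the image, since $F(x)=\prod_{i=1}^m(\Omega_i-x)$ is a product set and $X^m$ carries the maximum norm, the infimum defining $d(u,F(x))$ separates across coordinates:
$$
d(u,F(x))=\max_{1\le i\le m}d(u_i,\Omega_i-x)=\max_{1\le i\le m}d(u_i+x,\Omega_i),
$$
so in particular $d(0,F(x))=\max_{1\le i\le m}d(x,\Omega_i)$. I would also note that, with $\bar y=0$, one has $d(u,\bar y)=\|u\|=\max_{1\le i\le m}\|u_i\|$, and that under the maximum norm a ball $B_\delta(\bar x,0)\subset X\times X^m$ is the product of $\{x\in X:\|x-\bar x\|\le\delta\}$ with $\{(x_1,\ldots,x_m):x_i\in\delta\B,\ i=1,\ldots,m\}$.

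With these in hand the three translations are immediate. For (ii), metric subregularity of $F$ at $(\bar x,0)$ is the inequality $\gamma\,d(x,F^{-1}(0))\le d(0,F(x))$ for $x\in B_\delta(\bar x)$, which becomes $\gamma\,d(x,\bigcap_{i=1}^m\Omega_i)\le\max_{1\le i\le m}d(x,\Omega_i)$ for $x\in B_\delta(\bar x)$, i.e.\ precisely \eqref{Hlr}. For (i), metric semiregularity of $F$ at $(\bar x,0)$ is $\gamma\,d(\bar x,F^{-1}(y))\le d(y,0)$ for $y\in B_\delta(0)$; writing $y=(x_1,\ldots,x_m)$ this is $\gamma\,d(\bar x,\bigcap_{i=1}^m(\Omega_i-x_i))\le\max_{1\le i\le m}\|x_i\|$ for $x_i\in\delta\B$, i.e.\ \eqref{Hri}. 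For (iii), metric regularity of $F$ at $(\bar x,0)$ is $\gamma\,d(x,F^{-1}(y))\le d(y,F(x))$ for $(x,y)\in B_\delta(\bar x,0)$; by the ball observation and the distance formula this is $\gamma\,d(x,\bigcap_{i=1}^m(\Omega_i-x_i))\le\max_{1\le i\le m}d(x+x_i,\Omega_i)$ for $x\in B_\delta(\bar x)$ and $x_i\in\delta\B$, i.e.\ \eqref{umi}. In each case the two inequalities hold for exactly the same pairs $(\gamma,\delta)$, so the corresponding regularity properties are equivalent, and passing to the exact upper bound over the admissible $\gamma$ yields $\theta[\bold{\Omega}](\bar{x})=\theta[F](\bar{x},0)$, $\zeta[\bold{\Omega}](\bar{x})=\zeta[F](\bar{x},0)$, and $\hat{\theta}[\bold{\Omega}](\bar{x})=\hat{\theta}[F](\bar{x},0)$, by Theorem~\ref{T2.1} and Definition~\ref{metricR}.

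There is no real obstacle here; the proof is a change of variables. The only point deserving a moment's care is the displayed distance formula for $d(u,F(x))$: it uses both that the $i$-th factor $\Omega_i-x$ of $F(x)$ constrains only the $i$-th coordinate (so the infimum distributes over the maximum) and the specific choice of the maximum norm on $X^m$. I would also make explicit that the equivalences obtained are at the level of the defining inequalities themselves, which is what legitimizes the passage to exact upper bounds and hence the three constant identities.
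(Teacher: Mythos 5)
Your proof is correct and follows exactly the route the paper intends: the paper gives no details, stating only that the proposition ``is a consequence of Theorem~\ref{T2.1}'', and your change of variables (using $F^{-1}(u)=\bigcap_{i=1}^m(\Omega_i-u_i)$ and the coordinatewise splitting of $d(u,F(x))$ under the maximum norm) is precisely the verification that the inequalities \eqref{mhr}, \eqref{msr}, \eqref{mr} at $(\bar x,0)$ coincide with \eqref{Hri}, \eqref{Hlr}, \eqref{umi}, respectively, for the same pairs $(\gamma,\delta)$. The only point you rightly flag — that the infimum defining $d(u,F(x))$ distributes over the maximum because $F(x)$ is a product set — is the sole nontrivial observation, and it is handled correctly.
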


\begin{rem}
Assertion (iii) was proved in \cite[Proposition 8]{Kru06.1} (see also \cite[Theorem 3]{Kru05.1} and \cite[p. 491]{LewLukMal09}).
The equivalence of subregularity of $\bold{\Omega}$ and metric subregularity of $F$ has been established by Hesse and Luke in Proposition 33 (ii) of their recent preprint \cite{HesLuk}.
This proposition has not been included in the final version of their article which appeared in \cite{HesLuk13}.
\end{rem}

Conversely, regularity properties of set-valued mappings between normed linear spaces can be treated as realizations of the corresponding regularity properties of certain collections of two sets.

For a given set-valued mapping $F:X\rightrightarrows Y$ between normed linear spaces and a point $(\bar x,\bar y)\in \gr F$, one can consider the collection $\bold{\Omega}$ of two sets $\Omega_1= \gr F$ and $\Omega_2=X\times \{\bar y\}$ in $X\times Y$.
It is obvious that $(\bar x,\bar y)\in \Omega_1\cap\Omega_2$.

\begin{thm}\label{theorem14}
Consider $F$ and $\bold{\Omega}$ as above.
\begin{enumerate}
\item
$F$ is metrically semiregular at $(\bar x,\bar y)$ if and only if $\bold{\Omega}$ is semiregular at $(\bar x,\bar y)$.
Moreover,
\begin{equation}\label{newestSem}
\frac{\theta[F](\bar{x},\bar y)}{\theta[F](\bar{x},\bar y)+2}\le \theta[\bold{\Omega}](\bar{x},\bar y)\le \theta[F](\bar{x},\bar y)/2.
\end{equation}
\item
$F$ is metrically subregular at $(\bar x,\bar y)$ if and only if $\bold{\Omega}$ is subregular at $(\bar x,\bar y)$.
Moreover,
\begin{equation}\label{newest}
\frac{\zeta[F](\bar{x},\bar y)}{\zeta[F](\bar{x},\bar y)+2}\le \zeta[\bold{\Omega}](\bar{x},\bar y)\le \min\{\zeta[F](\bar{x},\bar y)/2,1\}.
\end{equation}
\item
$F$ is metrically regular at $(\bar x,\bar y)$ if and only if $\bold{\Omega}$ is uniformly regular at $(\bar x,\bar y)$.
Moreover,
\begin{equation}\label{newestReg}
\frac{\hat{\theta}[F](\bar{x},\bar y)}{\hat{\theta}[F](\bar{x},\bar y)+2}\le \hat{\theta}[\bold{\Omega}](\bar{x},\bar y)\le \min\{\hat{\theta}[F](\bar{x},\bar y)/2,1\}.
\end{equation}
\end{enumerate}
\end{thm}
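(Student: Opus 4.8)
The plan is to exploit the metric characterizations of the three regularity properties of $\bold{\Omega}=\{\gr F,\,X\times\{\bar y\}\}$ furnished by Theorem~\ref{T2.1} and translate them into the metric inequalities of Definition~\ref{metricR}. The starting observation is a dictionary between the relevant distances. For a point $(x,y)\in X\times Y$, using the maximum norm on the product one has $d\bigl((x,y),\,X\times\{\bar y\}\bigr)=\norm{y-\bar y}$, while $d\bigl((x,y),\,\gr F\bigr)=\inf\{\max\{\norm{x-u},\norm{y-v}\}:\,v\in F(u)\}$, and $d\bigl((x,y),\,\Omega_1\cap\Omega_2\bigr)=d\bigl((x,y),\,\gr F\cap(X\times\{\bar y\})\bigr) =\inf\{\norm{x-u}:\,\bar y\in F(u)\}=d(x,F^{-1}(\bar y))$ (when the infimum is attained; in general a limiting argument gives the same value). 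The two-sided estimates in \eqref{newestSem}--\eqref{newestReg} come precisely from the fact that in the expression $d\bigl((x,y),\,\gr F\bigr)$ the displacement is split between the $X$-component and the $Y$-component, which costs a factor of at most $2$ compared with $d(\bar y,F(x))$ or $d(y,F(x))$.

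First I would prove (ii) in detail, as (i) and (iii) are handled by the same device with the obvious modifications of which variables are perturbed. For the lower bound in \eqref{newest}: assume $F$ is metrically subregular at $(\bar x,\bar y)$ with constant $\gamma$ on $B_\delta(\bar x)$, fix any $\gamma'<\gamma$, and let $(x,y)\in B_{\delta'}(\bar x,\bar y)$ for a suitably small $\delta'$. Estimate $d\bigl((x,y),\,\Omega_1\cap\Omega_2\bigr)=d(x,F^{-1}(\bar y))\le\gamma^{-1}d(\bar y,F(x))$. Now bound $d(\bar y,F(x))$ by $\norm{y-\bar y}+d(y,F(x))$ and note $d(y,F(x))\le d\bigl((x,y),\gr F\bigr)$, whereas $\norm{y-\bar y}=d\bigl((x,y),\Omega_2\bigr)$; combining and absorbing constants yields an inequality of the form $\gamma''\,d\bigl((x,y),\Omega_1\cap\Omega_2\bigr)\le\max\{d((x,y),\Omega_1),d((x,y),\Omega_2)\}$ with $\gamma''=\gamma/(\gamma+2)$ after optimizing, which by Theorem~\ref{T2.1}~(ii) gives $\zeta[\bold{\Omega}](\bar x,\bar y)\ge\zeta[F](\bar x,\bar y)/(\zeta[F](\bar x,\bar y)+2)$ upon letting $\gamma\to\zeta[F](\bar x,\bar y)$. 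For the upper bound, start from \eqref{Hlr} for $\bold{\Omega}$ with constant $\gamma$, and test it at points $(x,\bar y)$ with $x\in B_\delta(\bar x)$: then $d((x,\bar y),\Omega_2)=0$, so $d((x,\bar y),\Omega_1)=\max\{d((x,\bar y),\Omega_1),d((x,\bar y),\Omega_2)\}$, and $\gamma\,d(x,F^{-1}(\bar y))=\gamma\,d((x,\bar y),\Omega_1\cap\Omega_2)\le d((x,\bar y),\gr F)\le d(\bar y,F(x))$; one also sees $d((x,\bar y),\gr F)\ge\tfrac12 d(\bar y,F(x))$ is false in general, so instead the cleaner route is: $d((x,\bar y),\gr F)\le d(\bar y,F(x))$ trivially (take $u=x$), giving $\gamma\,d(x,F^{-1}(\bar y))\le d(\bar y,F(x))$, hence $\zeta[F](\bar x,\bar y)\ge\gamma$, i.e. $\zeta[F](\bar x,\bar y)\ge\zeta[\bold{\Omega}](\bar x,\bar y)$. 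To sharpen this to the factor $2$, one instead chooses the test point so that the optimal $u$ in $d((x,\bar y),\gr F)$ differs from $x$, which forces $\norm{x-u}\le d((x,\bar y),\gr F)$ and $d(\bar y,F(u))\le d((x,\bar y),\gr F)$, so $d(\bar y,F(x))\le d(\bar y,F(u))+\norm{x-u}\le 2\,d((x,\bar y),\gr F)$; feeding this back yields $2\gamma\,d(x,F^{-1}(\bar y))\le d(\bar y,F(x))$ along an appropriate sequence. The bound $\zeta[\bold{\Omega}]\le 1$ is Remark~\ref{newrem}.

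Parts (i) and (iii) follow the identical template. For (i) the perturbations live only in the $Y$-variable: in the semiregularity inequality \eqref{Hri} for $\bold{\Omega}$ one perturbs $\Omega_2$ by $(0,y-\bar y)$ and $\Omega_1$ not at all, and $d\bigl((\bar x,\bar y),(\Omega_1)\cap(\Omega_2-(0,y-\bar y))\bigr)$ relates to $d(\bar x,F^{-1}(y))$ via the same two-sided splitting bound, producing \eqref{newestSem}. For (iii) one perturbs both sets and works with the uniform inequality \eqref{umi}, the translation of $d\bigl((x,y)+\text{(shift)},\Omega_i\bigr)$ against $d(y,F(x))$ again costing a factor of at most $2$; the extra cap by $1$ in \eqref{newestReg} is Remark~\ref{newrem}. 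The equivalences ``$F$ metrically (sub/semi)regular $\iff$ $\bold{\Omega}$ (sub/semi/uniformly) regular'' are then immediate from the two-sided estimates, since each side of each inequality is positive exactly when the other is.

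The main obstacle I anticipate is \textbf{bookkeeping of the neighbourhood radii and the attainment of infima}. The distance $d((x,y),\gr F)$ need not be attained (no compactness, $F$ not assumed closed-valued), so wherever I wrote ``the optimal $u$'' I must instead work with a minimizing sequence $u_k$ with $\max\{\norm{x-u_k},\norm{y-v_k}\}\to d((x,y),\gr F)$, $v_k\in F(u_k)$, and propagate the $\varepsilon_k\to0$ slack through the triangle inequalities; this is routine but must be done carefully to land the exact constants $\gamma/(\gamma+2)$ and $\gamma/2$ rather than something weaker. A secondary point requiring care is checking that the perturbations used to test the $\bold{\Omega}$-inequalities indeed stay within the prescribed balls $B_\delta$, which forces the $\delta'$ for the $F$-side to be chosen as an explicit (small) function of $\delta$ and of the target constant $\gamma'$ — again routine, but it is where an incautious argument would produce only a one-sided or constant-losing estimate.
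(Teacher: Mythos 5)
Your overall architecture is right, and the lower bounds $\gamma/(\gamma+2)$ are essentially recoverable from your plan: the paper proves them by exactly the splitting you describe (the displacement between $\Omega_1=\gr F$ and $\Omega_2=X\times\{\bar y\}$ is shared between the $X$- and $Y$-components, the condition $2\alpha/\gamma+\alpha<1$ producing the constant $\gamma/(\gamma+2)$). One slip there: you write $d(y,F(x))\le d\bigl((x,y),\gr F\bigr)$, which is backwards — the graph distance allows $x$ to move, so it is the \emph{smaller} quantity, and $d(y,F(x))$ can be arbitrarily larger (or $+\infty$). The correct repair, which the paper uses, is to pass to a nearby graph point $(x_1,y_1)$ with $\max\{\|x-x_1\|,\|y-y_1\|\}\le d((x,y),\Omega_1)+\varepsilon$ and apply metric subregularity of $F$ \emph{at $x_1$}, then come back to $x$ by the triangle inequality; this still lands $\gamma/(\gamma+2)$.

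The genuine gap is in the upper bounds $\theta[\bold{\Omega}]\le\theta[F]/2$, $\zeta[\bold{\Omega}]\le\zeta[F]/2$, $\hat\theta[\bold{\Omega}]\le\hat\theta[F]/2$. Your test point $(x,\bar y)$ can never yield the factor $2$: for the constant mapping $F(x)\equiv\{c\}$ one has $d((x,\bar y),\gr F)=d(\bar y,F(x))$ exactly, so that route caps out at factor $1$. Your proposed sharpening invokes $d(\bar y,F(x))\le d(\bar y,F(u))+\|x-u\|$, which is a Lipschitz property of $F$ that is not assumed and is false in general (e.g.\ $F$ with a jump at $x$); moreover, even granting it, the resulting inequality points the wrong way for the desired conclusion. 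The device the paper uses is a symmetric \emph{midpoint} splitting: for (ii), pick $y\in F(x)$ with $\|y-\bar y\|\approx d(\bar y,F(x))=:r$ and test the subregularity inclusion at $\bigl(x,\tfrac{y+\bar y}{2}\bigr)$, which lies within $r/2$ of \emph{both} sets but still at distance $\ge d(x,F^{-1}(\bar y))$ from their intersection, giving $2\gamma\,d(x,F^{-1}(\bar y))\le\|y-\bar y\|$; for (i) and (iii) the perturbation $y-\bar y$ is split as $(0,\tfrac{y-\bar y}{2})$ applied to $\Omega_1$ and $(0,\tfrac{\bar y-y}{2})$ applied to $\Omega_2$, each of half the norm, which is what doubles the constant. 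Without this symmetric sharing between the two sets you only prove $\theta[\bold{\Omega}]\le\theta[F]$ etc., so the right-hand inequalities of \eqref{newestSem}--\eqref{newestReg} remain unproved in your proposal. (The caps by $1$ via Remark~\ref{newrem} are fine, as are the qualitative equivalences once both two-sided estimates are in place.)
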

\begin{proof} (i) Suppose $F$ is metrically semiregular at $(\bar x,\bar y)$, i.e., $\theta[F](\bar{x},\bar y)>0$. Fix a $\gamma\in ]0,\theta[F](\bar{x},\bar y)[$. Then there exists a number $\delta'>0$ such that \eqref{mhr} is satisfied for all $y\in B_{\delta'}(\bar y)$. Take any $\alpha>0$ satisfying $2\alpha/\gamma+\alpha<1$, and a $\delta:=\frac{\delta'}{2\alpha}$. We are going to check that
\begin{equation}\label{HR'}
\left(\Omega_1-(u_1,v_1)\right)\bigcap \left(\Omega_2-(u_2,v_2)\right)\bigcap B_{\rho}(\bar x,\bar y)\neq \emptyset
\end{equation}
for all $\rho\in ]0,\delta[$ and $(u_1,v_1)$, $(u_2,v_2)\in (\alpha\rho)\B $.
Indeed, take any $\rho\in ]0,\delta[$ and $(u_1,v_1),(u_2,v_2)\in (\alpha\rho)\B $. We need to find a point $(x,y)\in B_{\rho}(\bar x,\bar y)$ satisfying
\sloppy
$$
\left\{
  \begin{array}{ll}
    (x,y)+(u_1,v_1)\in \gr F,\\
    y=\bar y-v_2\hbox{.}
  \end{array}
\right.
$$
We set $y':=\bar y-v_2+v_1$, so $y' \in B_{\delta'}(\bar y)$ as $\|y'-\bar y\|=\|v_1-v_2\|\le 2\alpha\rho<2\alpha\delta=\delta'$.
Then there is, by \eqref{mhr}, an $x'\in F^{-1}(y')$ such that
$$
\|\bar x-x'\|\le \frac{1}{\gamma}\|\bar y-y'\|.
$$
Put $y:=y'-v_1=\bar y-v_2$ and $x:=x'-u_1$.
Then it holds
$$(x,y)+(u_1,v_1)=(x',y') \in \gr F,\quad \norm{y-\bar y}=\|v_2\|\le \alpha\rho<\rho,$$
and
\begin{multline*}
\|x-\bar x\|\le \|x-x'\|+\|x'-\bar x\|\le \|u_1\|+\frac{1}{\gamma}\|\bar y-y'\|\\
=\|u_1\|+\frac{1}{\gamma}\|v_1-v_2\| \le (2\alpha/\gamma+\alpha)\rho<\rho.
\end{multline*}
Hence, \eqref{HR'} is proved.

The above reasoning also yields the first inequality in \eqref{newestSem}.

To prove the inverse implication, we suppose $\bold{\Omega}$ is semiregular at $(\bar x,\bar y)$, i.e., $\theta[\bold{\Omega}](\bar{x},\bar y)>0$.
Fix an $\alpha\in ]0,\theta[\bold{\Omega}](\bar{x},\bar y)[$. Then there exists a $\delta'>0$ such that \eqref{HR'} holds true for all $\rho\in ]0,\delta'[$ and $(u_1,v_1),(u_2,v_2)\in (\alpha\rho)\B $.
Set $\gamma:=2\alpha$ and $\delta<\alpha\delta'$. We are going to check that \eqref{mhr} is satisfied.
Take any $y\in B_{\delta}(\bar y)$, i.e., $\|y-\bar y\|\le \delta<\alpha\delta'$. Set $r\in ]0,\delta'[$ such that $\|y-\bar y\|=\alpha r$.
Then, applying \eqref{HR'} for $(u_1,v_1):=\left(0,\frac{y-\bar y}{2}\right),(u_2,v_2)=:\left(0,\frac{\bar y-y}{2}\right)\in \left(\alpha\frac{r}{2}\right)\B $, we can find
$(x_1,y_1)\in \gr F$ and $(x_2,\bar y)\in \Omega_2$ satisfying
$$
(x_1,y_1)-(u_1,v_1)=(x_2,\bar y)-(u_2,v_2)\in B_{\frac{r}{2}}(\bar x,\bar y).
$$
This implies that $y_1=y$, $x_1\in F^{-1}(y)$ and
$$
\|x_1-\bar x\|\le \frac{r}{2}=\frac{1}{2\alpha}\|y-\bar y\|=\frac{1}{\gamma}\|y-\bar y\|.
$$
Hence, \eqref{mhr} holds true.

The last reasoning also yields the second inequality in \eqref{newestSem}.

(ii) Suppose $F$ is metrically subregular at $(\bar x,\bar y)$, i.e., $\zeta[F](\bar{x},\bar y)>0$.
Fix a ${\gamma\in ]0,\zeta[F](\bar{x},\bar y)[}$. Then there exists $\delta'>0$ such that \eqref{msr} is satisfied for all $x\in B_{\delta'}(\bar x)$.
Take an $\alpha>0$ satisfying $2\alpha/\gamma+\alpha<1$, and a $\delta:=\frac{\delta'}{\alpha+1}$. We are going to check that
\begin{equation}\label{Hlr''}
\left(\Omega_1+ (\alpha\rho)\B \right)\bigcap \left(\Omega_2+ (\alpha\rho)\B \right)\bigcap B_{\delta}(\bar x,\bar y)\subseteq \Omega_1\cap \Omega_2+\rho\B
\end{equation}
for all $\rho\in ]0,\delta[$.
Indeed, take any
$$
(x,y)\in\left(\Omega_1+ (\alpha\rho)\B \right)\bigcap \left(\Omega_2+ (\alpha\rho)\B \right)\bigcap B_{\delta}(\bar x,\bar y).
$$
Then
$(x,y)=(x_1,y_1)+(u_1,v_1)=(x_2,\bar y)+(u_2,v_2)$ for some $(x_1,y_1)\in \gr F$, $x_2\in X$, and $(u_1,v_1)$, $(u_2,v_2)\in (\alpha\rho)\B $.
Since
$$
\|x_1-\bar x\|\le\|u_1\|+\|x-\bar x\| \le \alpha\rho+\delta<(\alpha+1)\delta=\delta',
$$
by \eqref{msr}, there exists an $x'\in F^{-1}(\bar y)$ such that $\|x_1-x'\|\le \frac{1}{\gamma}d(\bar y,F(x_1))\le \frac{1}{\gamma}\|\bar y-y_1\|$. Then
\begin{align*}
\norm{x_1-x'+u_1}\le
&\frac{1}{\gamma}\|\bar y-y_1\|+\norm{u_1}=\frac{1}{\gamma}\norm{v_1-v_2}+\norm{u_1}
\\
\le &\frac{2\alpha\rho}{\gamma}+\alpha\rho = \Big{(}\frac{2}{\gamma}+1\Big{)}\alpha\rho<\rho,
\\
\norm{v_2}\le & \alpha\rho<\rho.
\end{align*}
Hence, $(x,y)=(x',\bar y)+(x_1-x'+u_1,v_2)\in \Omega_1\cap\Omega_2+\rho\B $.

The above reasoning also yields the first inequality in \eqref{newest}.

To prove the inverse implication, we suppose that $\bold{\Omega}$ is subregular at $(\bar x,\bar y)$, i.e., $\zeta[\bold{\Omega}](\bar{x},\bar y)>0$. Fix an $\alpha\in ]0,\zeta[\bold{\Omega}](\bar{x},\bar y)[$. Then there exists a $\delta'>0$ such that \eqref{Hlr''} holds true for all $\rho\in ]0,\delta'[$.
Set $\gamma:=2\alpha>0$ and $\delta:=\min\left\{\delta',\gamma\delta',\frac{2\delta'}{\gamma}\right\}$. We are going to check that \eqref{msr} holds true.
Take any $x\in B_{\delta}(\bar x)$.
Because $d(x,F^{-1}(\bar y))\le \|x-\bar x\|\le \delta$, it is sufficient to consider the case $0<d(\bar y,F(x))<\gamma\delta$. We choose a $y\in F(x)$ such that $d(\bar y, F(x))\le \|y-\bar y\|:=r <\gamma\delta$. Then
$$
\left(x,\frac{y+\bar y}{2}\right)=(x,y)+\left(0,\frac{\bar y-y}{2}\right)=(x,\bar y)+\left(0,\frac{y-\bar y}{2}\right),\quad\norm{\frac{\bar y-y}{2}}= \frac{ r}{2}<\delta',
$$
and consequently
\begin{equation}\label{e43}
\left(x,\frac{y+\bar y}{2}\right)\in \left(\Omega_1+ \frac{ r}{2}\B \right)\bigcap \left(\Omega_2+ \frac{ r}{2}\B \right)\bigcap B_{\delta'}(\bar x,\bar y).
\end{equation}
Take $\rho:=\frac{r}{2\alpha}< \delta\le \delta'$.
Then $\frac{r}{2}=\alpha\rho$, and it follows from \eqref{Hlr''} and \eqref{e43} that
$$
\left(x,\frac{y+\bar y}{2}\right)\in \Omega_1\cap \Omega_2+\frac{r}{2\alpha}\B  =F^{-1}(\bar y)\times\{\bar y\} +\frac{\|y-\bar y\|}{\gamma}\B .
$$
Hence, there is an $x'\in F^{-1}(\bar y)$ such that
$$
\|x-x'\|\le \frac{1}{\gamma}\|y-\bar y\|.
$$
Taking infimum in the last inequality over $x'\in F^{-1}(\bar y)$ and $y\in F(x)$, we arrive at \eqref{msr}.

The last reasoning together with $\zeta[\bold{\Omega}](\bar{x},\bar y)\le 1$, in view of \eqref{errcon'}, yields the second inequality in \eqref{newest}.

(iii) Suppose $F$ is metrically regular at $(\bar x,\bar y)$, i.e., $\hat{\theta}[F](\bar{x},\bar y)>0$.
Fix a ${\gamma\in ]0,\hat{\theta}[F](\bar{x},\bar y)[}$.
Then there exists a $\delta'>0$ such that \eqref{mr} is satisfied for all $(x,y)\in B_{\delta'}(\bar x,\bar y)$.
Take an $\alpha>0$ satisfying $2\alpha/\gamma+\alpha<1$, and a $\delta:=\frac{\delta'}{2\alpha+1}$. We are going to check that
\begin{equation}\label{HUR'}
\left(\Omega_1-(x_1,y_1)-(u_1,v_1)\right)\bigcap \left(\Omega_2-(x_2,\bar y)-(u_2,v_2)\right)\bigcap (\rho\B )\neq \emptyset
\end{equation}
for all $\rho\in ]0,\delta[$, $(x_1,y_1)\in \Omega_1\cap B_{\delta}(\bar x,\bar y),x_2\in B_{\delta}(\bar x)$, and $(u_1,v_1),(u_2,v_2)\in (\alpha\rho)\B $.
Take any such $\rho,(x_1,y_1),x_2,(u_1,v_1)$, and $(u_2,v_2)$. We need to find $(a,b)\in \rho\B $ satisfying
$$
\left\{
  \begin{array}{ll}
    (x_1,y_1)+(u_1,v_1)+(a,b)\in \gr F,\\
    b=-v_2\hbox{.}
  \end{array}
\right.
$$
We set $y'=y_1-v_2+v_1$, so $y' \in B_{\delta'}(\bar y)$ as
$$
\|y'-\bar y\|\leq \|y'-y_1\|+\|y_1-\bar y\|\le \|v_1-v_2\|+\delta \le 2\alpha\rho+\delta<(2\alpha+1)\delta=\delta'.
$$
Then, applying \eqref{mr} for $(x_1,y')\in B_{\delta'}(\bar x,\bar y)$, we find $x'\in F^{-1}(y')$ such that
$$
\|x_1-x'\|\le \frac{1}{\gamma}d(y',F(x_1))\le\frac{1}{\gamma}\|y'-y_1\|=\frac{1}{\gamma}\|v_1-v_2\|\le \frac{2\alpha\rho}{\gamma}.
$$
Put $a=x'-x_1-u_1$ and $b=-v_2$. Then $\|a\|\le \|x'-x_1\|+\|u_1\|\le (2\alpha/\gamma+\alpha)\rho<\rho$, $\|b\|\le \alpha\rho<\rho$, and it holds
$(x_1,y_1)+(u_1,v_1)+(a,b)=(x',y') \in \gr F$.\\
Hence, \eqref{HUR'} is proved.

The above reasoning also yields the first inequality in \eqref{newestReg}.

To prove the inverse implication, we suppose that $\bold{\Omega}$ is uniformly regular at $(\bar x,\bar y)$, i.e., $\hat{\theta}[\bold{\Omega}](\bar{x},\bar y)>0$. Fix an $\alpha\in ]0,\hat{\theta}[\bold{\Omega}](\bar{x},\bar y)[$. Then there exists a $\delta'>0$ such that \eqref{HUR'} holds true for all $\rho\in ]0,\delta'[$, $(x_1,y_1)\in \Omega_1\cap B_{\delta'}(\bar x,\bar y),x_2\in B_{\delta'}(\bar x)$, and $(u_1,v_1)$, ${(u_2,v_2)\in (\alpha\rho)\B}$.
Set $\gamma:=2\alpha>0$. Because $\theta[\bold{\Omega}](\bar{x},\bar y)\ge \hat{\theta}[\bold{\Omega}](\bar{x},\bar y)$ (see Remark \ref{newrem}), assertion (i) implies that there exists a $\delta^*>0$ such that \eqref{mhr} is satisfied for all ${y\in B_{\delta^*}(\bar y)}$.
Set
\begin{equation}\label{del}
\delta:=\min\left\{\delta^*,\frac{\delta'}{2\alpha+2},\frac{\alpha\delta'}{2\alpha+1}\right\}>0.
\end{equation}
Now take any $(x,y)\in B_{\delta}(\bar x,\bar y)$.
We are going to check that \eqref{mr} is satisfied. Because \eqref{mhr} implies
\begin{equation*}
    \gamma d(x,F^{-1}(y))\le \gamma\|x-\bar x\|+\gamma d(\bar x,F^{-1}(y))\le \gamma\delta+\|y-\bar y\|\le (\gamma+1)\delta,
\end{equation*}
it suffices to consider the case $d(y,F(x))<(\gamma+1)\delta\le \alpha\delta'$.
Choose a $y'\in F(x)$ such that
$$d(y,F(x))\le \|y-y'\|<(\gamma+1)\delta,$$
and set $r\in ]0,\delta'[$ such that $\|y-y'\|=\alpha r<\alpha\delta'$.
Then
$$\|y'-\bar y\|\le \|y'-y\|+\|y-\bar y\|< (2\alpha+2)\delta\le \delta'$$
due to \eqref{del}.
Applying \eqref{HUR'} with
$$(x_1,y_1):=(x,y')\in \gr F\cap B_{\delta'}(\bar x,\bar y),\quad (x_2,y_2):=(\bar x,\bar y),$$ $$(u_1,v_1):=\left(0,\frac{y-y'}{2}\right),\quad (u_2,v_2)=:\left(0,\frac{y'-y}{2}\right)\in \left(\alpha\frac{r}{2}\right)\B,$$
we can find
$(\tilde x,\tilde y)\in \gr F$ and $(z,\bar y)\in \Omega_2$ satisfying
$$
(\tilde x,\tilde y)-(x_1,y_1)-(u_1,v_1)=(z,\bar y)-(x_2,\bar y)-(u_2,v_2)\in \frac{r}{2}\B .
$$
This implies $\tilde x-x_1\in \frac{r}{2}\B $ and $\tilde y=y_1+v_1-v_2=y$, so $\tilde x\in F^{-1}(y)$. Then we obtain
$$
d(x,F^{-1}(y))\le \|x-\tilde x\|\le \frac{r}{2}=\frac{1}{2\alpha}\|y-y'\|=\frac{1}{\gamma}\|y-y'\|.
$$
Taking infimum in the last inequality over $y'\in F(x)$, we arrive at \eqref{mr}.

The last reasoning together with $\hat{\theta}[\bold{\Omega}](\bar{x},\bar y)\le 1$, in view of \eqref{vartheta}, yields the second inequality in \eqref{newestReg}.
\qed
\end{proof}
\begin{rem}
The equivalences stated in Theorem \ref{theorem14} (i) and (iii) has been proved in \cite[Theorem 7]{Kru09.1} by using some auxiliary set-valued mapping.
The first inequalities in \eqref{newestSem} and \eqref{newestReg} improve the corresponding estimates given in the aforementioned reference
because it is always true that
\begin{gather*}
\frac{1}{2}\min\{\theta[F](\bar{x},\bar y)/2,1\}\le \frac{\theta[F](\bar{x},\bar y)}{\theta[F](\bar{x},\bar y)+2},
\\
\frac{1}{2}\min\{\hat\theta[F](\bar{x},\bar y)/2,1\}\le \frac{\hat\theta[F](\bar{x},\bar y)}{\hat\theta[F](\bar{x},\bar y)+2}.
\end{gather*}
Statement (ii) in Theorem \ref{theorem14} seems to be new.
\end{rem}

\section{Conclusions}

In this article, we continue investigating regularity properties of collections of sets in normed linear spaces.

We systematically examine three closely related primal space local regularity properties: \emph{semiregularity}, \emph{subregularity}, and \emph{uniform regularity} and their quantitative characterizations.
In Theorem~\ref{T2.1}, we establish equivalent metric characterizations of the three mentioned properties and demonstrate, in particular, the equivalence of subregularity and another important property, usually referred to as local \emph{linear regularity}.

In Theorem~\ref{T3.1}~(i), in the Asplund space setting, we give a new dual space sufficient condition of subregularity in terms of Fr\'echet normals.
The proof of this theorem consists of a series of propositions providing other (primal and dual space) sufficient conditions of subregularity which can be of independent interest.

We present also relationships between the mentioned regularity properties of collections of sets and the corresponding regularity properties of set-valued mappings which, in particular, explain the terminology adopted in this article.

The definitions and characterizations of the regularity properties of collections of sets discussed in this article can be extended to the more general H\"older type setting -- cf. \cite{KruTha}.

\begin{acknowledgement}
The authors wish to thank the referees for very careful reading of the paper and many valuable comments and suggestions, which helped us improve the presentation.
We also thank Editor-in-Chief Professor Franco Giannessi for his time and patience when handling our paper.
\end{acknowledgement}

\end{document}